\DeclareMathOperator*{\argmin}{arg\,min}
\DeclareMathOperator*{\rank}{rank}
\DeclareMathOperator*{\sign}{sign}
\theoremstyle{remark}
  \newtheorem*{remark}{Remarks}
\newtheorem{theorem}{Theorem}[section]
\newtheorem{lemma}{Lemma}[section]
\newtheorem{definition}{Definition}[section]
\newcommand{\eq}[1]{(\ref{eq:#1})}
\newcommand{\lem}[1]{Lemma~\ref{lemma:#1}}
\newcommand{\thm}[1]{Theorem~\ref{theorem:#1}}
\newcommand{\sect}[1]{Section~\ref{sec:#1}}
\newcommand{\fig}[1]{Figure~\ref{fig:#1}}
\begin{document}

\begin{frontmatter}



\title{On the optimal rank-1 approximation of matrices in the Chebyshev norm\footnote[1]{This work was supported by the Moscow Center of Fundamental and Applied Mathematics at INM RAS (Agreement with the Ministry of Education and Science of the Russian Federation No.075-15-2022-286).}}

\author[inst1]{Stanislav Morozov\footnote[2]{E-mail addresses: \texttt{stanis-morozov@yandex.ru}, \texttt{matsmir98@gmail.com}, \texttt{nikolai.zamarashkin@gmail.com}}}

\affiliation[inst1]{organization={Marchuk Institute of Numerical Mathematics of Russian Academy of Sciences},
            addressline={Gubkin Street 8}, 
            city={Moscow},
            postcode={119333}, 
            country={Russia}}

\author[inst1,inst2]{Matvey Smirnov}
\author[inst1,inst2]{Nikolai Zamarashkin}

\affiliation[inst2]{organization={Lomonosov Moscow State University},
            addressline={Leninskie Gory 1}, 
            city={Moscow},
            postcode={119991}, 
            country={Russia}}
\begin{abstract}
The problem of low rank approximation is ubiquitous in science. Traditionally this problem is solved in unitary invariant norms such as Frobenius or spectral norm due to existence of efficient methods for building approximations. However, recent results reveal the potential of low rank approximations in Chebyshev norm, which naturally arises in many applications. In this paper we tackle the problem of building optimal rank-1 approximations in the Chebyshev norm. We investigate the properties of alternating minimization algorithm for building the low rank approximations and demonstrate how to use it to construct optimal rank-1 approximation. As a result we propose an algorithm that is capable of building optimal rank-1 approximations in Chebyshev norm for moderate matrices.
\end{abstract}



\begin{keyword}
Chebyshev norm \sep low-rank matrix approximations \sep alternating minimization
\MSC 41A50 \sep 65F30
\end{keyword}

\end{frontmatter}


\section{Introduction}
\label{sec:intro}
To date, the problem of low-rank approximation of matrices is a critical component in many areas of science, such as computational mathematics \cite{bebendorf2008means}, computational fluid dynamics \cite{son2014data}, recommender systems \cite{he2016fast}, machine learning \cite{yang2018oboe}, and others. This problem can be easily solved in unitary invariant norms for example via SVD (singular value decomposition) or ALS (alternating least squares) algorithm. However, in many applications the elementwise approximations (i.e. in Chebyshev norm) are more appropriate. 

Formally the problem of rank-$r$ Chebyshev approximation is formulated as follows. Given matrix $A \in \mathbb{R}^{m \times n}$ and integer $r$, it is required to find $\widehat{U} \in \mathbb{R}^{m \times r}$ and $\widehat{V} \in \mathbb{R}^{n \times r}$ such that
\begin{equation*}
    \|A - \widehat{U} \widehat{V}^T\|_C = \inf\limits_{U \in \mathbb{R}^{m \times r}, V \in \mathbb{R}^{n \times r}} \|A - U V^T\|_C,
\end{equation*}
where $\|X\|_C = \max\limits_{i,j} |x_{ij}|$ is the Chebyshev norm.


In this paper we address the problem of rank-1 approximations in the Chebyshev norm. More precisely, we investigate the \textit{alternating minimization algorithm} and its structure. The algorithm can be briefly summarized as follows: fix arbitrary $v^{(0)}$ and generate sequences $v^{(k)}$ and $u^{(k)}$ such that $$\|A - u^{(k)}(v^{(k-1)})^T\|_C = \min\limits_{u}\|A - u(v^{(k-1)})^T\|_C,$$ $$\|A - u^{(k)}(v^{(k)})^T\|_C = \min\limits_{v}\|A - u^{(k)}v^T\|_C$$ for all $k \in \mathbb N$. We show that for almost all matrices $A$ (in the sense of Lebesgue measure) it is possible to choose the solutions $u^{(k)}$ and $v^{(k)}$ of the foregoing minimization problems in a concrete way, if all the components of $v^{(0)}$ are non-zero. Moreover, the limit of errors
\begin{equation*}
    \|A - u^{(1)}(v^{(0)})^T\|_C, \|A - u^{(1)}(v^{(1)})^T\|_C, \dots
\end{equation*}
depends only on the signs of components of $v^{(0)}$. Finally, it is always possible to extract a converging subsequence from the sequence of matrices $u^{(k)} (v^{(k)})^T$ and its limit gives one of the best rank-1 approximations for $A$ from a relatively big set of pairs $(u,v)$. This observation allows to conclude that performing alternating minimization algorithm for all $2^n$ starting points whose components are $\pm 1$ finds an \textit{optimal} rank-1 approximation. Furthermore, by analyzing the behaviour of signs during alternating minimization method, we propose an improvement for an exhaustive search of optimal approximation (that is, the number of runs can be significantly reduced in comparison to $2^n$). More precisely, it suffices to consider only starting points whose signs of components do not change under the iterations of the alternating minimization method. We encode the information about behaviour of signs in the form of specific directed graphs associated with the matrix $A$ and reveal the structure of these graphs.


The rest of the paper is organized as follows.
In \sect{related} we review the results concerning Chebyshev approximations known in the literature. In \sect{prelim}, we present several facts concerning definition, well-posedness, and basic properties of the alternating minimization method. In \sect{mainres}, we analyze the behaviour of signs of the components of the vectors generated by the alternating minimization method. In \sect{conv} we prove the main results concerning the convergence. 
Finally, in \sect{numres}, we describe a numerical method that makes it possible to find optimal Chebyshev approximations of rank 1 and present the results of numerical experiments.

\section{Related work}
\label{sec:related}
As far as we know, the problem of constructing and analyzing low-rank approximations of matrices
in the Chebyshev norm has been little studied.

One of the important properties of the Chebyshev norm is that, in a sense, in that norm any matrix can be approximated by a matrix of low rank. More precisely, the following theorem is proved in \cite{udell2019big}.
\begin{theorem}
\label{theorem:townsend_theorem}
Let $X \in \mathbb{R}^{m\times n}$, where $m \ge n$ and $0 < \varepsilon < 1$. Then, with
\begin{equation*}
r = \lceil 72\log{(2n + 1)}/\varepsilon^2 \rceil
\end{equation*}
we have
\begin{equation*}
\inf\limits_{\rank Y \le r} \|X - Y\|_C \le \varepsilon \|X\|_2.
\end{equation*}
\end{theorem}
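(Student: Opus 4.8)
The plan is to reduce the statement to a Johnson--Lindenstrauss (JL) estimate and to realize the approximant $Y$ as the image of $X$ under a random low-dimensional projection. By homogeneity of both sides in $X$ we may rescale so that $\|X\|_2 = 1$; the goal then becomes the purely entrywise bound $\max_{i,j}|x_{ij} - y_{ij}| \le \varepsilon$ for some $Y$ of rank at most $r$.

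First I would pass to a convenient factorization. Writing the SVD $X = \sum_{k=1}^n \sigma_k u_k v_k^T$ with $\sigma_1 = \|X\|_2 = 1$, set $p_i = (\sqrt{\sigma_k}\,u_{ik})_{k=1}^n$ and $q_j = (\sqrt{\sigma_k}\,v_{jk})_{k=1}^n$, so that $x_{ij} = \langle p_i, q_j\rangle$. Since $\sigma_k \le 1$ and $U,V$ have orthonormal columns, one checks that $\|p_i\|_2 \le 1$ and $\|q_j\|_2 \le 1$. Now draw a random matrix $\Pi \in \mathbb{R}^{r\times n}$ (for instance with independent Gaussian entries of variance $1/r$, so that $\mathbb{E}\,\Pi^T\Pi = I$) and define $Y = P\Pi^T\Pi Q^T$, equivalently $y_{ij} = \langle \Pi p_i, \Pi q_j\rangle$; this $Y$ has rank at most $r$ by construction.

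The heart of the argument is the inner-product form of the JL lemma: for fixed unit vectors $a,b$, the polarization identity expresses $\langle \Pi a, \Pi b\rangle - \langle a,b\rangle$ through the length distortions of $a\pm b$, each of which obeys an explicit sub-exponential tail bound of the form $\Pr\bigl[\,\bigl|\,\|\Pi w\|_2^2 - \|w\|_2^2\,\bigr| > \varepsilon\,\bigr] \le 2\exp(-c\,r\varepsilon^2)$. I would apply this to every pair that actually enters the error $x_{ij} - y_{ij} = \langle p_i,(\Pi^T\Pi - I)q_j\rangle$, take a union bound, and choose $r$ large enough that the total failure probability is strictly less than $1$; a single deterministic $Y$ with $\max_{i,j}|x_{ij} - y_{ij}| \le \varepsilon$ must then exist. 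Calibrating the explicit Gaussian tail constant together with the factor-of-four loss incurred by polarization is precisely what produces the numerical constant $72$ and the shape $r \sim \log(\cdot)/\varepsilon^2$.

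I expect the delicate point to be keeping the cardinality inside the logarithm equal to $2n+1$ rather than the naive $m+n$ (or $mn$). A term-by-term union bound over all $p_i$ would introduce a $\log m$ factor, whereas the stated rank is independent of $m$. The fix is to arrange the union bound so that only $O(n)$ vectors must be embedded faithfully---exploiting that the relevant right factors and the coordinate directions span spaces of dimension at most $n$---while the contribution of the $m$ rows is controlled through their norms rather than through a separate probabilistic event for each. Getting this reduction right, as opposed to the routine tail estimate, is the main obstacle; once the number of events is $O(n)$, substituting $N = 2n+1$ into $r \gtrsim c^{-1}\varepsilon^{-2}\log N$ and absorbing the polarization overhead yields the claimed $r = \lceil 72\log(2n+1)/\varepsilon^2 \rceil$.
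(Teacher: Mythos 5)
First, a point of reference: this paper never proves the statement itself --- it is quoted verbatim from the cited work of Udell and Townsend, so your attempt must be compared with the argument given there, whose skeleton (a Johnson--Lindenstrauss sketch realizing $Y$ as a low-rank ``re-inner-producting'' of a factorization of $X$) you have correctly reproduced. The genuine gap is exactly the step you flag as the main obstacle, and the fix you sketch does not work. To control $x_{ij}-y_{ij}=\langle p_i,(I-\Pi^T\Pi)q_j\rangle$ ``through the norms'' of the $m$ rows rather than through one probabilistic event per row, you would need $\|(I-\Pi^T\Pi)q_j\|_2\le\varepsilon$; but for Gaussian $\Pi\in\mathbb{R}^{r\times n}$ one computes $\mathbb{E}\,\|(I-\Pi^T\Pi)q_j\|_2^2=\frac{n+1}{r}\|q_j\|_2^2$, which is vastly larger than $\varepsilon^2$ in the regime $r\sim\log(n)/\varepsilon^2$: the vector $(I-\Pi^T\Pi)q_j$ is never small, only its inner products with a \emph{fixed finite} family of vectors are. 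For the same reason ``the rows span a space of dimension at most $n$'' cannot be exploited: asking $|\langle a,(I-\Pi^T\Pi)q_j\rangle|\le\varepsilon\|a\|_2$ uniformly over a subspace is the same as $\|(I-\Pi^T\Pi)q_j\|_2\le\varepsilon$, and already for $X=I$ this forces $n-r\le\|I-\Pi^T\Pi\|_F^2\le n\varepsilon^2$, i.e.\ $r\ge n(1-\varepsilon^2)$. Even the best salvage along your lines --- discard rows with $\|x^i\|_2\le\varepsilon$ (approximating them by zero), of which at most $n/\varepsilon^2$ significant rows remain since $\|X\|_F^2\le n\|X\|_2^2$ --- yields $r\lesssim\log(n+n/\varepsilon^2+1)/\varepsilon^2$, not $72\log(2n+1)/\varepsilon^2$.

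The resolution in the original source is not a cleverer union bound but a different hypothesis: there the theorem takes $n$ to be the \emph{larger} dimension ($m\le n$). Then one applies JL with distortion $t=\varepsilon/3$ to the $N=m+n+1\le 2n+1$ points $\{0\}\cup\{x^1,\dots,x^m\}\cup\{e_1,\dots,e_n\}\subset\mathbb{R}^n$ (rows of the normalized $X$, coordinate vectors, and the origin), sets $Y=X\Pi^T\Pi$ so that $y_{ij}=\langle\Pi x^i,\Pi e_j\rangle$, and recovers inner products from squared norms and distances: $|\langle\Pi a,\Pi b\rangle-\langle a,b\rangle|\le\frac{t}{2}\left(\|a\|^2+\|b\|^2+\|a-b\|^2\right)\le 3t$ when $\|a\|,\|b\|\le1$. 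This is also where the constant comes from: $72=8\cdot 3^2$, i.e.\ the JL bound $r\ge 8\log(N)/t^2$ at $t=\varepsilon/3$ and $N=2n+1$ --- not the polarization bookkeeping you invoke, which would instead require norm preservation of the roughly $mn$ vectors $p_i\pm q_j$ and produce a $\log(mn)$-type bound. Note, finally, that with the inequality ``$m\ge n$'' as printed in this paper the statement puts the \emph{smaller} dimension inside the logarithm; that version does not follow from the JL route at all (the honest point count is $m+n+1\ge 2n+1$) and appears to be a transcription slip for $m\le n$. So your suspicion about the counting was well placed, but the correct conclusion is that no rearrangement of the union bound achieves it: the hypothesis itself must make $n$ the larger dimension.
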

The theorem states that with a fixed accuracy of the approximation $\varepsilon$ and a bounded spectral norm of the matrix $\|X\|_2$, the rank required to obtain the accuracy $\varepsilon$ in the Chebyshev norm increases logarithmically with the growth of the matrix size.
The difference between the low-rank approximations in the Frobenius and Chebyshev norms can be seen on the example of the identity matrix. An identity matrix of size $n \times n$ can be approximated with rank $n-1$ with accuracy not less than $1$ in the Frobenius norm. At the same time, with the fixed accuracy $\varepsilon$, the rank required to obtain the Chebyshev $\varepsilon$-approximation grows logarithmically with the size of the matrix. So, for example, it is guaranteed that identity matrix of size $10^{100}$ can be approximated with an accuracy $\varepsilon = 0.01$ by a matrix of rank $r \approx 1.6\times 10^8$.

In \cite{udell2019big} the authors propose a constructive method for proving \thm{townsend_theorem}, however, it requires the construction of singular value decomposition of the matrix, which prevents the method from being applied to large matrices. At the same time, the large constant in \thm{townsend_theorem} does not give reasonable estimates for small matrices. In \cite{zamarashkin2022best}, an algorithm is proposed that is capable of constructing more efficient approximations. The authors propose a generalization of the Remez algorithm, which allows to find the optimal solution to the problem
\begin{equation*}
     \|A - U V^T\|_C \to \min\limits_{U \in \mathbb{R}^{m \times r}}, ~~~ A \in \mathbb{R}^{m \times n}, V \in \mathbb{R}^{n \times r}.
\end{equation*}
Then the authors applied alternating minimization method, that is, alternately computed $\argmin\limits_{U \in \mathbb{R}^{m \times r}} \|A - U V^T\|_C$ and $\argmin\limits_{V \in \mathbb{R}^{n \times r}} \|A - U V^T\|_C$.

In \cite{daugavet1971uniform}, it is also proposed to use the alternating minimization method for solving the problem of Chebyshev approximations, but only for approximations of rank 1. At the same time, the structure of the alternating minimization method in application to this problem is deeply analyzed. Among the main results proved in \cite{daugavet1971uniform}, the following can be highlighted. A necessary and sufficient condition is proved for a pair of vectors $(u, v)$ to be a local minimum of the problem
\begin{equation}
    \label{eq:main_problem}
     \|A - uv^T\|_C \to \min\limits_{u \in \mathbb{R}^m, v \in \mathbb{R}^n}.
\end{equation}
In addition, it is always possible to select a convergent subsequence from the sequence of vectors generated by the alternating minimization method, and any convergent subsequence converges to a local minimum of the problem~\eq{main_problem}. In this paper, we essentially rely on the theoretical results obtained in \cite{daugavet1971uniform}, but unlike \cite{daugavet1971uniform}, we pose ourselves the problem of finding a \textit{global} minimum of~\eq{main_problem} and develop the corresponding theory. 

The alternating minimization method was also proposed in \cite[Section 4]{gillis2019low} as an heuristic algorithm to solve~\eq{main_problem}. This work mainly concerns a different problem, namely the decision variant of~\eq{main_problem}. That is, given $k \ge 0$ determine, whether there exist $u \in \mathbb R^m$ and $v \in \mathbb R^n$ such that $\|A - u v^T\|_C \le k$. The authors prove that this problem can be solved in polynomial time if the signs of $u$ and $v$ are known, and that the general decision problem is NP-complete. However, the theoretical analysis of the structure of the alternating minimization method is not presented there. It is noteworthy that both in our paper and in \cite{gillis2019low} some results are achieved by considering some special graphs, even though the graphs are completely different. Finally, we note that the problem posed in \cite[Remark 2]{gillis2019low} concerning the signs of the components of $u$ and $v$ that give an optimal rank-1 approximation for a matrix that is sufficiently close to a rank-1 matrix is solved here in remarks in Sections~4 and~5.

\section{Preliminaries}
\label{sec:prelim}
In this section we provide the definition of the alternating minimization method and the setting, in which it is well-defined.
Before doing so we briefly discuss the simpler approximation problem of the form
\begin{equation}
\label{eq:one_vector_problem}
     \|a - uv\|_\infty \to \min\limits_{u \in \mathbb{R}},
\end{equation}
where $a$ and $v$ are given vectors. Throughout this paper we shall use the symbol $\sign$ to denote the function defined on $\mathbb R$ by the rule 
$$
\sign(t) = \begin{cases}
1, & \text{if}\; t > 0 \\
0, & \text{if}\; t = 0 \\
-1, & \text{if}\; t < 0
\end{cases}
$$
Also, in what follows everywhere we assume that the sizes $m$ and $n$ are greater than $1$. Note that some of our results (for example, \lem{one_vector_approx} below and all facts related to the notion of alternance) are not applicable in the case, where one of the matrix' sizes is equal to $1$.
\begin{lemma}

\label{lemma:one_vector_approx}
Let $a,v \in \mathbb R^n$. Assume that all components of $v$ do not vanish. Then there exists a unique $t \in \mathbb R$ such that
\begin{equation*}
    \|a-tv\|_{\infty} = \inf\limits_{u \in \mathbb R}\|a - uv\|.
\end{equation*}
Moreover, $u = t$ if and only if there is a pair of distinct indices $i,j \in \{1, \dots, n\}$ such that
\begin{eqnarray*}
    |a_i - uv_i| = |a_j - uv_j| & = & \|a - uv\|_{\infty},\\
    \sign(v_i(a_i - uv_i)) & = & -\sign(v_j(a_j - uv_j)).
\end{eqnarray*}
\end{lemma}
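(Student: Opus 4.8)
The plan is to study the single-variable function $f(u) = \|a - uv\|_\infty = \max_{1 \le i \le n} |a_i - u v_i|$ and exploit its convexity. Writing $g_i(u) = |a_i - u v_i|$, each $g_i$ is convex and piecewise linear, and since $v_i \neq 0$ it is genuinely V-shaped with one-sided slopes equal to $\mp |v_i|$ and a single kink at $u = a_i / v_i$; in particular $g_i(u) \to +\infty$ as $|u| \to \infty$. Hence $f$, being a finite maximum of such functions, is convex, continuous and coercive, so the infimum $\inf_u f(u)$ is attained. This gives existence of a minimizer $t$. For uniqueness I would argue that $f$ cannot be constant on any interval: on any subinterval avoiding the finitely many kinks $a_i/v_i$, every $g_i$ is affine with nonzero slope $\pm|v_i|$, so there $f$ is a maximum of finitely many non-constant affine functions, which cannot be constant. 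Since the set of minimizers of a convex function is an interval on which $f$ equals its minimal value, this set must be a single point, namely $t$.

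For the \emph{moreover} part I would use the standard minimality criterion for convex functions: $u$ minimizes $f$ if and only if $f'_-(u) \le 0 \le f'_+(u)$, together with the formulas for one-sided derivatives of a maximum, $f'_+(u) = \max_{i \in I(u)} (g_i)'_+(u)$ and $f'_-(u) = \min_{i \in I(u)} (g_i)'_-(u)$, where $I(u) = \{i : |a_i - u v_i| = f(u)\}$ is the active set. The key computation is that, whenever the residual $r_i = a_i - u v_i$ is nonzero, $g_i$ is differentiable at $u$ with $g_i'(u) = -|v_i|\,\sign(v_i r_i)$; thus the sign condition $\sign(v_i r_i) = -\sign(v_j r_j)$ is precisely the statement that two active functions have derivatives of opposite sign, i.e.\ one active index contributes a strictly positive and another a strictly negative slope.

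With this dictionary the two implications follow. If a pair $(i,j)$ as in the statement exists with $f(u) = M > 0$, then $r_i, r_j \neq 0$ and $(g_i)', (g_j)'$ have opposite signs, forcing $f'_-(u) < 0 < f'_+(u)$, so $u$ is a minimizer and $u = t$ by uniqueness. Conversely, if $u = t$ and $M > 0$, then every active $g_i$ is differentiable with nonzero slope, and $f'_-(t) \le 0 \le f'_+(t)$ forces the active slopes to take both a positive and a negative value, yielding the required pair together with the opposite-sign relation. The degenerate case $M = 0$, i.e.\ $a = tv$, is immediate: all residuals vanish, so both sides of the sign equality equal $0$ and any two distinct indices work (which exist since $n > 1$), consistent with $t$ being the unique minimizer.

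I expect the main obstacle to be the bookkeeping around the one-sided derivatives and the active set — in particular justifying the $\max/\min$ formulas for $f'_\pm$ and carefully separating the non-degenerate regime (where $r_i \neq 0$ at every active index, so kinks play no role) from the degenerate case — rather than the convexity and coercivity arguments, which are routine.
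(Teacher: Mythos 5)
Your proof is correct, and it is worth noting how it relates to what the paper actually does: the paper does not prove this lemma in full. For existence and uniqueness your argument coincides with the paper's sketch (convexity and coercivity of $f(u)=\|a-uv\|_\infty$, plus the observation that a finite maximum of affine pieces with slopes $\pm|v_i|\neq 0$ cannot be constant on an interval), but for the \emph{moreover} part the paper simply cites Lemma~3 of the reference \cite{zamarashkin2022best} (specialized to $1\times n$ matrices), whereas you give a self-contained first-order-optimality argument. Your dictionary $g_i'(u)=-|v_i|\,\sign\bigl(v_i(a_i-uv_i)\bigr)$ at non-kink points, combined with $f'_+(u)=\max_{i\in I(u)}(g_i)'_+(u)$, $f'_-(u)=\min_{i\in I(u)}(g_i)'_-(u)$ and the criterion $f'_-(u)\le 0\le f'_+(u)$, makes transparent that the two-index sign condition is exactly the statement that the active slopes straddle zero; the separation of the degenerate case $\|a-tv\|_\infty=0$ (where $n>1$, assumed throughout the paper, guarantees the required pair of indices) is also handled correctly. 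What your route buys is a proof that does not lean on an external result; what the paper's route buys is brevity and consistency with the machinery of \cite{zamarashkin2022best}, which it reuses elsewhere (e.g.\ for the explicit formula and continuity of $\mu(a,v)$). The only things left implicit in your write-up are the standard max-of-convex-functions derivative formulas, which you correctly flag as routine bookkeeping.
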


Since this lemma is quite elementary we only sketch the proof. Existence of $t$ is a well-known fact about approximations on finite dimensional spaces. Uniqueness easily follows from the fact that components of $v$ do not vanish: the function $\|a - tv\|_{\infty}$ is convex and piece-wise linear with non-zero slopes. Finally, the last statement in this lemma is an immediate corollary of a special case of Lemma~3 from \cite{zamarashkin2022best} for size $1$ matrices. In \cite{zamarashkin2022best} the authors also propose an algorithm that can build the solution of the problem \eq{one_vector_problem} in a polynomial number of operations if all components of $v$ are non-zero.

\begin{definition}
Let $v \in \mathbb{R}^n$. A vector $v$ is called Chebyshev if all of its components are non-zero. We call $v$ alternance-free if there is only one number $i \in \{1, \dots, n\}$ such that $|v_i| = \|v\|_{\infty}$. We denote the set of all Chebyshev vectors in $\mathbb R^n$ by $\mathrm{Ch}_n$.
\end{definition}

Let $a,v \in \mathbb R^n$ and assume that $v$ is Chebyshev. Let $\mu(a,v)$ denote the unique real number satisfying 
\begin{equation*}
    \|a - \mu(a,v)v\|_\infty = \inf\limits_{u \in \mathbb R} \|a - uv\|_\infty.
\end{equation*}
The explicit formula for $\mu(a,v)$ given in \cite{zamarashkin2022best} implies that the mapping $\mu(a,v)$ is continuous (in both arguments). Also, if $a$ is an alternance-free vector, then we introduce $\chi(a) \in \{1, \dots, n\}$ and $\delta(a) \in \mathbb R$ such that
\begin{equation*}
    |a_{\chi(a)}| = \|a\|_\infty \quad \text{and} \quad \delta(a) = \|a\|_\infty - \max\limits_{j \ne \chi(a)}|a_j|.
\end{equation*}

\begin{theorem}
\label{theorem:preserve_chebyshev}
Let $a \in \mathbb{R}^{n}$. Then the following statements hold.
\begin{enumerate}[label=(\roman*)]
\item\label{pr_ch_i} $\mu(a,v) \ne 0$ for all Chebyshev $v \in \mathbb R^n$ if and only if $a$ is alternance-free.
\item\label{pr_ch_ii} Assume that $a$ is alternance-free and let $v \in \mathbb R^n$ be a Chebyshev vector. Then $\sign(\mu(a,v)) = \sign(a_{\chi(a)} v_{\chi(a)})$ and the inequality
\begin{equation}
\label{eq:pres_ch}
    \dfrac{\delta(a)}{2 \|v\|_\infty} \le |\mu(a,v)| \le \dfrac{2 \|a\|_\infty}{\|v\|_\infty}
\end{equation}
holds.
\end{enumerate}
\begin{proof}
Assume that $a$ is alternance-free. Then \lem{one_vector_approx} implies that $a - \mu(a,v)v$ is not alternance-free for all Chebyshev vectors $v$ and, hence, $\mu(a,v) \ne 0$. Now assume that $a$ is not alternance-free and let $i,j \in \{1, \dots, n\}$ be a distinct pair of indices such that
\begin{equation*}
    |a_i| = |a_j| = \|a\|_\infty.
\end{equation*}
Let $v \in \mathbb R^n$ denote any Chebyshev vector such that $\sign(v_i a_i) = - \sign(v_ja_j)$. Then, by \lem{one_vector_approx}, we obtain that $\mu(a,v) = 0$. Thus, the statement \ref{pr_ch_i} is verified.

Now we prove \ref{pr_ch_ii}. As $a$ is alternance-free, it follows that $\mu(a,v) \ne 0$ and, therefore,
\begin{equation*}
    \|a - \mu(a,v) v\|_\infty < \|a\|_\infty.
\end{equation*}
Thus,
\begin{equation*}
    |a_{\chi(a)} - \mu(a,v) v_{\chi(a)}| < |a_{\chi(a)}|
\end{equation*}
and this is possible only if $\sign(\mu(a,v)) = \sign(a_{\chi(a)} v_{\chi(a)})$. It remains to prove \eqref{eq:pres_ch}. Since
\begin{equation*}
    \|a - \mu(a,v) v\|_\infty \le \|a\|_\infty,
\end{equation*}
we obtain that
\begin{equation*}
    |\mu(a,v)| \|v\|_\infty - \|a\|_\infty \le \|a\|_\infty.
\end{equation*}
The second inequality in \eqref{eq:pres_ch} immediately follows. In order to prove the remaining inequality, note that \lem{one_vector_approx} implies that
\begin{equation*}
    |a_i - \mu(a,v) v_i| \ge |a_{\chi(a)} - \mu(a,v) v_{\chi(a)}|
\end{equation*}
for some $i \ne \chi(a)$. Thus,
\begin{equation*}
    |a_i| + |\mu(a,v)| |v_i| \ge |a_{\chi(a)}| - |\mu(a,v)| |v_{\chi(a)}|
\end{equation*}
and, therefore,
\begin{equation*}
    |\mu(a,v)|(|v_i| + |v_{\chi(a)}|) \ge |a_{\chi(a)}| - |a_i| \ge \delta(a).
\end{equation*}
\end{proof}
\end{theorem}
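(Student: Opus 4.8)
The plan is to reduce everything to the alternance characterization of the optimal scalar supplied by \lem{one_vector_approx}, combined with the ordinary and reverse triangle inequalities. For part \ref{pr_ch_i}, I would first note that $\mu(a,v) = 0$ means precisely that $u = 0$ already minimizes $\|a - uv\|_\infty$. Applying \lem{one_vector_approx} at $u = 0$, this happens if and only if there is a pair of distinct indices $i,j$ with $|a_i| = |a_j| = \|a\|_\infty$ and $\sign(v_i a_i) = -\sign(v_j a_j)$. The first of these conditions says exactly that $a$ is not alternance-free. Hence, if $a$ is alternance-free no such pair of maximal coordinates exists, so $\mu(a,v) \ne 0$ for every Chebyshev $v$. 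Conversely, if $a$ is not alternance-free, I would take two indices attaining $\|a\|_\infty$ and build a Chebyshev $v$ whose signs on those coordinates make $\sign(v_i a_i)$ and $\sign(v_j a_j)$ opposite; this forces $\mu(a,v) = 0$, contradicting the universal statement. This establishes the equivalence.

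For the sign claim in part \ref{pr_ch_ii}, I would use part \ref{pr_ch_i} to conclude $\mu(a,v) \ne 0$, so that $\|a - \mu(a,v)v\|_\infty < \|a\|_\infty$ strictly. Restricting this to the coordinate $\chi(a)$, where $|a_{\chi(a)}| = \|a\|_\infty$, gives $|a_{\chi(a)} - \mu(a,v)v_{\chi(a)}| < |a_{\chi(a)}|$. Since $\mu(a,v)v_{\chi(a)} \ne 0$ (because $\mu(a,v) \ne 0$ and $v$ is Chebyshev), a one-dimensional sign argument shows the scalar step must move $a_{\chi(a)}$ strictly toward zero, giving $\sign(\mu(a,v)v_{\chi(a)}) = \sign(a_{\chi(a)})$, which rearranges to $\sign(\mu(a,v)) = \sign(a_{\chi(a)}v_{\chi(a)})$.

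For the upper bound in \eq{pres_ch}, I would compare against the competitor $u = 0$, giving $\|a - \mu(a,v)v\|_\infty \le \|a\|_\infty$, and combine this with the reverse triangle inequality $|\mu(a,v)|\,\|v\|_\infty - \|a\|_\infty \le \|a - \mu(a,v)v\|_\infty$ to obtain $|\mu(a,v)|\,\|v\|_\infty \le 2\|a\|_\infty$. For the lower bound, I would again invoke \lem{one_vector_approx} at the optimum to produce two distinct coordinates at which the error $\|a - \mu(a,v)v\|_\infty$ is attained; at least one of them, say $i$, differs from $\chi(a)$, and there $|a_i - \mu(a,v)v_i| \ge |a_{\chi(a)} - \mu(a,v)v_{\chi(a)}|$. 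Bounding the left side above by $|a_i| + |\mu(a,v)||v_i|$ and the right side below by $|a_{\chi(a)}| - |\mu(a,v)||v_{\chi(a)}|$, then using $|a_{\chi(a)}| - |a_i| \ge \delta(a)$ and $|v_i| + |v_{\chi(a)}| \le 2\|v\|_\infty$, yields $|\mu(a,v)| \ge \delta(a)/(2\|v\|_\infty)$.

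The main obstacle is the lower bound: the delicate point is justifying that one of the two alternance coordinates guaranteed by \lem{one_vector_approx} can be taken distinct from $\chi(a)$, and then arranging the two triangle inequalities so that the gap $\delta(a)$ emerges cleanly rather than a weaker quantity. Everything else is a direct consequence of optimality and the characterization of $\mu(a,v) = 0$.
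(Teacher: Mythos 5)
Your proposal is correct and follows essentially the same route as the paper's proof: part \ref{pr_ch_i} via the alternance characterization of \lem{one_vector_approx} applied at $u=0$ (with the same construction of a sign-mismatched Chebyshev $v$ for the converse), the sign claim via strict decrease at the coordinate $\chi(a)$, the upper bound via the competitor $u=0$ and the reverse triangle inequality, and the lower bound via a second alternance coordinate $i \ne \chi(a)$ combined with the two triangle inequalities. The "delicate point" you flag is handled exactly as in the paper: the lemma gives two distinct coordinates attaining the maximal residual, so at least one differs from $\chi(a)$, and there the residual dominates the residual at $\chi(a)$.
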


Now we are ready to introduce the alternating minimization method. Here and further we denote columns of matrices by subscripts and rows by superscripts. For example, $j$-th column of matrix $A$ is denoted with $a_j$ and $i$-th row is denoted with $a^i$. Given a matrix $A \in \mathbb R^{m \times n}$ and a vector $v \in \mathrm{Ch}_n$ we denote by $\phi(A,v) \in \mathbb R^m$ the vector such that
\begin{equation*}
    \phi(A,v)_i = \mu(a^i, v), \quad i = 1, \dots, m.
\end{equation*}
Similarly, if $u \in \mathrm{Ch}_m$, then we define$\psi(A,u) \in \mathbb R^n$ by the equalities
\begin{equation*}
    \psi(A,u)_j = \mu(a_j, u), \quad j = 1, \dots, n.
\end{equation*}
It is easy to see that $\psi(A,u) = \phi(A^T, u)$. It follows from definition that
\begin{equation*}
    \inf\limits_{u \in \mathbb R^m} \|A - uv^T\|_C = \|A - \phi(A,v)v^T\|_C
\end{equation*}
for all $v \in \mathrm{Ch}_n$. Also the similar equality holds for the mapping $\psi$. Note that continuity of $\mu$ implies continuity of the mappings $\phi$ and $\psi$.

Let $A \in \mathbb R^{m \times n}$. We say that a pair of sequences $\{v^{(k)} \in \mathrm{Ch}_n\}_{k \in \mathbb N}$ and $\{u^{(k)} \in \mathrm{Ch}_m\}_{k \in \mathbb N}$ \textit{is obtained by the alternating minimization method} for a matrix $A$ and an initial point $v^{(0)} \in \mathrm{Ch}_n$, if
\begin{equation*}
    u^{(k)} = \phi(A, v^{(k-1)}) \quad \text{and} \quad v^{(k)} = \psi(A, u^{(k)}) \quad \text{for all} \quad k \in \mathbb N.
\end{equation*}

\begin{definition}
We say that the matrix $A \in \mathbb{R}^{m \times n}$ preserves Chebyshev systems if for any Chebyshev vectors $u \in \mathbb{R}^m$ and $v \in \mathbb {R}^n$ vectors $\phi(A, v)$ and $\psi(A, u)$ are also Chebyshev vectors. By $\mathrm{PC}_{m,n}$ we denote the set of all $m\times n$ matrices that preserve Chebyshev systems.
\end{definition}

It is clear from the definition and \lem{one_vector_approx}, that if $A \in \mathbb R^{m \times n}$ preserves Chebyshev systems and $v^{(0)} \in \mathrm{Ch}_n$, then there exists a (unique) pair of sequences $\{v^{(k)}\}_{k \in \mathbb N}$ and $\{u^{(k)}\}_{k \in \mathbb N}$ that is obtained by the alternating minimization method for $A$ and the initial point $v^{(0)}$. The following lemma gives a simple characterization of matrices that preserve Chebyshev systems and also states that such matrices are quite common in a certain sense.

\begin{lemma}
    A matrix $A\in \mathbb{R}^{m \times n}$ preserves Chebyshev systems if and only if all rows and columns of $A$ are alternance-free. The set $\mathrm{PC}_{m,n}$ is open and dense in $\mathbb R^{m \times n}$. The set $\mathbb R^{m \times n} \setminus \mathrm{PC}_{m,n}$ is Lebesgue-null.
\end{lemma}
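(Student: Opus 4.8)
The plan is to reduce the whole statement to a single fact about vectors, namely \thm{preserve_chebyshev}\ref{pr_ch_i}, together with the elementary observation that the vectors which are \emph{not} alternance-free form a finite union of hyperplanes. First I would settle the characterization. By definition, $A$ preserves Chebyshev systems if and only if $\phi(A,v)$ and $\psi(A,u)$ have no vanishing component for every pair of Chebyshev vectors $u,v$. Since $\phi(A,v)_i = \mu(a^i,v)$ and $\psi(A,u)_j = \mu(a_j,u)$, this amounts to demanding $\mu(a^i,v)\ne 0$ for every row $a^i$ and every Chebyshev $v$, and $\mu(a_j,u)\ne 0$ for every column $a_j$ and every Chebyshev $u$. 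By \thm{preserve_chebyshev}\ref{pr_ch_i}, each such condition holds exactly when the corresponding row (respectively column) is alternance-free, which is precisely the claimed equivalence.

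Next I would isolate the key geometric input: the set $N_n \subset \mathbb R^n$ of vectors that are not alternance-free is closed and contained in a finite union of hyperplanes. Indeed, a vector fails to be alternance-free precisely when its maximal absolute value is attained at two or more coordinates, so $N_n$ is contained in the union over pairs $i\ne j$ of the sets $\{v : |v_i| = |v_j|\}$, and each of these is the union of the two hyperplanes $\{v_i = v_j\}$ and $\{v_i = -v_j\}$. This containment makes $N_n$ Lebesgue-null immediately. Its closedness is the one point requiring an argument: given a convergent sequence of non-alternance-free vectors, by the pigeonhole principle a single pair of maximizing indices recurs along a subsequence, and passing to the limit shows that the limit vector also attains its maximum modulus at two distinct coordinates.

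Finally I would lift these properties to $\mathbb R^{m\times n}$. The assignment of a matrix to a fixed row or column is a coordinate projection, so in the entrywise coordinates the set of matrices whose $i$-th row lies in $N_n$ is again contained in a finite union of coordinate hyperplanes of $\mathbb R^{m\times n}$, and is therefore closed and of measure zero. By the characterization from the first step, $\mathbb R^{m\times n}\setminus \mathrm{PC}_{m,n}$ is exactly the union of these finitely many sets taken over all $m$ rows and $n$ columns; a finite union of closed null sets is closed and null, so $\mathrm{PC}_{m,n}$ is open with Lebesgue-null complement. Density is then automatic, since a set of measure zero contains no nonempty open ball and hence $\mathrm{PC}_{m,n}$ meets every nonempty open subset.

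The main obstacle here is not deep but is the place where care is needed: one must keep the exceptional set inside genuine hyperplanes rather than merely arguing it is null, because preimages of null sets under projections need not be null in general; here the explicit containment of $N_n$ in hyperplanes is exactly what makes the measure-zero property survive the lift to matrices. The closedness of $N_n$ via the pigeonhole argument is the only other non-formal step, and everything else is bookkeeping with finite unions and continuity.
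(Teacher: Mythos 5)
Your proof is correct and follows essentially the same approach as the paper: reduce to the row/column characterization via \thm{preserve_chebyshev}~\ref{pr_ch_i}, then note that the exceptional matrices lie in a finite union of hyperplanes of $\mathbb R^{m\times n}$, which gives nullity and density, with openness coming from the closedness of the set of non-alternance-free vectors. One minor quibble with your closing commentary only: the preimage of a Lebesgue-null set under a coordinate projection (indeed under any surjective linear map) is in fact always null by Fubini's theorem, so the explicit hyperplane containment, while clean, is not what ``saves'' the lift to matrices.
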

\begin{proof}
    Indeed, $A$ preserves Chebyshev systems if and only if $\mu(a^i, v)$ and $\mu(a_j, u)$ are not vanishing for all $i,j$ and all Chebyshev vectors $v \in \mathbb R^n$, $u \in \mathbb R^m$. \thm{preserve_chebyshev}~\ref{pr_ch_i} implies that this is the case if and only if $a^i$ and $a_j$ are alternance-free for all $i = 1, \dots, m$ and $j = 1, \dots, n$. From this characterization it is clear that $\mathrm{PC}_{m,n}$ is open in $\mathbb R^{m \times n}$ and that the complement $\mathbb R^{m \times n} \setminus \mathrm{PC}_{m,n}$ is contained in the union of a finite number of hyperplanes in $\mathbb R^{m\times n}$. So, the complement is Lebesgue-null and has empty interior.
\end{proof}

For the following lemma we shall introduce some 
more notation. If $v \in \mathbb R^n$ is a Chebyshev vector, then we say that $\|v\|_\infty / \min\limits_{i = 1, \dots, n} |v_i|$ is the \textit{amplitude} of $v$. We shall denote it by $\mathrm{am}(v)$.  
\begin{lemma}
\label{lemma:basic_prop}
Let $A \in \mathrm{PC}_{m,n}$ and $v^{(0)} \in \mathrm{Ch}_n$. Let the pair of sequences $\{v^{(k)}\}_{k \in \mathbb N}$ and $\{u^{(k)}\}_{k \in \mathbb N}$ be constructed by the alternating minimization method for the matrix $A$ and the initial point $v^{(0)}$. Then the following statements hold.
\begin{enumerate}[label=(\roman*)]
    \item\label{basic_prop_i} $\|A - u^{(k)} (v^{(k-1)})^T\|_C \ge \|A - u^{(k)} (v^{(k)})^T\|_C \ge \|A - u^{(k+1)} (v^{(k)})^T\|_C$ for all $k \in \mathbb N$.
    \item\label{basic_prop_ii} Let $\delta_r = \min\limits_{i = 1, \dots, m} \delta(a^i)$ and $\delta_c = \min\limits_{j = 1, \dots, n} \delta(a_j)$. Then $$\|u^{(k)}\|_\infty \|v^{(k-1)}\|_\infty \le 2 \|A\|_C,\;\; \|u^{(k)}\|_\infty \|v^{(k)}\|_\infty \le 2 \|A\|_C,$$
    $$\mathrm{am}(u^{(k)}) \le 4\|A\|_C / \delta_r,\;\; \mathrm{am}(v^{(k)}) \le 4\|A\|_C / \delta_c$$ for all $k \in \mathbb N$.
    \item\label{basic_prop_iii} If the pair of sequences $\{\tilde{v}^{(k)}\}_{k \in \mathbb N}$ and $\{\tilde{u}^{(k)}\}_{k \in \mathbb N}$ is obtained by the alternating minimization method for matrix $A$ and the initial point $cv^{(0)}$, where $c \ne 0$, then $\tilde{v}^{(k)} = cv^{(k)}$ and $\tilde{u}^{(k)} = 1/c\; u^{(k)}$.
\end{enumerate}
\begin{proof}
The equality
\begin{equation*}
    \inf\limits_{u \in \mathbb R^m} \|A - u(v^{(k)})^T\|_C = \|A - \phi(A,v^{(k)})(v^{(k)})^T\|_C
\end{equation*}
implies
\begin{equation*}
    \|A - u^{(k)} (v^{(k)})^T\|_C \ge \|A - u^{(k+1)} (v^{(k)})^T\|_C
\end{equation*}
because $u^{(k+1)} = \phi(A,v^{(k)})$. The other inequality in the statement~\ref{basic_prop_i} can be proved similarly. The inequalities of~\ref{basic_prop_ii} are the immediate consequences of \thm{preserve_chebyshev}~\ref{pr_ch_ii} and of just proven~\ref{basic_prop_i}. Finally,~\ref{basic_prop_iii} follows from an elementary observation $\mu(a,cv) = 1/c\;\mu(a,v)$.
\end{proof}
\end{lemma}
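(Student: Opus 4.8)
The plan is to treat the three parts separately, since each is a direct consequence of material already established. For \ref{basic_prop_i} I would only unfold the defining optimality of $\phi$ and $\psi$: because $u^{(k)} = \phi(A, v^{(k-1)})$ attains $\inf_u \|A - u(v^{(k-1)})^T\|_C$ and $v^{(k)} = \psi(A, u^{(k)})$ attains $\inf_v \|A - u^{(k)} v^T\|_C$, the middle quantity $\|A - u^{(k)}(v^{(k)})^T\|_C$ is a minimum over $v$, hence at most its value at $v = v^{(k-1)}$ (the first inequality), and with $u^{(k)}$ now fixed it is at least the value at $u = u^{(k+1)} = \phi(A, v^{(k)})$ (the second inequality). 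No estimate enters, only the minimizing property of each map.

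For \ref{basic_prop_ii} I would apply \thm{preserve_chebyshev}~\ref{pr_ch_ii} entry-by-entry. The upper bound $|\mu(a,v)| \le 2\|a\|_\infty/\|v\|_\infty$ applied to each row gives $|u^{(k)}_i| = |\mu(a^i, v^{(k-1)})| \le 2\|A\|_C/\|v^{(k-1)}\|_\infty$, so the maximum over $i$ yields $\|u^{(k)}\|_\infty\,\|v^{(k-1)}\|_\infty \le 2\|A\|_C$; the companion product bound for $v^{(k)} = \psi(A, u^{(k)})$ follows identically. For the amplitudes I would pair this with the lower bound $|\mu(a,v)| \ge \delta(a)/(2\|v\|_\infty)$, which gives $\min_i |u^{(k)}_i| \ge \delta_r/(2\|v^{(k-1)}\|_\infty)$; dividing the two estimates lets the factor $\|v^{(k-1)}\|_\infty$ cancel and leaves $\mathrm{am}(u^{(k)}) \le 4\|A\|_C/\delta_r$, and symmetrically $\mathrm{am}(v^{(k)}) \le 4\|A\|_C/\delta_c$.

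For \ref{basic_prop_iii} the single identity to record is the scaling relation $\mu(a, cv) = (1/c)\,\mu(a,v)$, valid since $\|a - u(cv)\|_\infty = \|a - (cu)v\|_\infty$ forces the minimizing $u$ for $cv$ to equal $1/c$ times the minimizing scalar for $v$. I would then induct on $k$: assuming $\tilde v^{(k-1)} = c v^{(k-1)}$, the identity gives $\tilde u^{(k)} = \phi(A, c v^{(k-1)}) = (1/c)\,u^{(k)}$ componentwise, and a second application gives $\tilde v^{(k)} = \psi(A, (1/c)\,u^{(k)}) = c\,v^{(k)}$, closing the induction from the base case $\tilde v^{(0)} = c v^{(0)}$.

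I do not expect a genuine obstacle here, as the lemma is bookkeeping on top of \thm{preserve_chebyshev}. The two spots demanding care are the amplitude estimate in \ref{basic_prop_ii}, where one must check that the intermediate norm $\|v^{(k-1)}\|_\infty$ cancels rather than compounds, and the induction in \ref{basic_prop_iii}, where the reciprocal scaling on $u$ versus $v$ must be tracked without sign or exponent slips.
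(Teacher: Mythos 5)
Your proposal is correct and takes essentially the same route as the paper: part~\ref{basic_prop_i} from the minimizing property of $\phi$ and $\psi$, part~\ref{basic_prop_ii} from \thm{preserve_chebyshev}~\ref{pr_ch_ii} applied row- and column-wise, and part~\ref{basic_prop_iii} from the scaling identity $\mu(a,cv) = \tfrac{1}{c}\,\mu(a,v)$. The only (cosmetic) difference is that you make explicit what the paper calls ``immediate'' — the cancellation of $\|v^{(k-1)}\|_\infty$ in the amplitude bounds and the induction in \ref{basic_prop_iii} — and in doing so you show that \ref{basic_prop_ii} follows from the theorem alone, without actually invoking \ref{basic_prop_i} as the paper's proof states.
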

Let $A \in \mathbb R^{m \times n}$ preserve Chebyshev systems. Let $v \in \mathbb R^n$ be a Chebyshev vector and construct a pair of sequences $\{v^{(k)}\}_{k \in \mathbb N}$ and $\{u^{(k)}\}_{k \in \mathbb N}$ by the alternating minimization method for the matrix $A$ with the initial point $v^{(0)} = v$. \lem{basic_prop}~\ref{basic_prop_i} implies that the sequence $\|A - u^{(k)} (v^{(k)})^T\|_C$ decreases and, since it consists only of non-negative numbers, it converges. We shall denote its limit by $E(A,v)$. The concluding lemma of this section contains elementary properties of this function.

\begin{lemma}
\label{lemma:basic_prop_E}
Let $A \in \mathrm{PC}_{m,n}$. Then the following statements hold.
\begin{enumerate}[label=(\roman*)]
    \item\label{basic_prop_E_i} If $v \in \mathrm{Ch}_n$, then $E(A,v) \ge 0$ and $E(A,v) = E(A,cv)$ for all $c \ne 0$.
    \item\label{basic_prop_E_ii} If $v \in \mathrm{Ch}_n$, then $E(A,v) = E(A,w)$, where $w = \psi(A, \phi(A,v))$.
    \item\label{basic_prop_E_iii} The function $E(A,v)$ is upper semi-continuous with respect to $v \in \mathrm{Ch}_n$.
\end{enumerate}
\begin{proof}
    The statements~\ref{basic_prop_E_i} and~\ref{basic_prop_E_ii} immediately follow from the definition of $E(A,v)$. The upper semi-continuity holds, for $E(A,v)$ is a limit of a decreasing sequence of continuous functions.
\end{proof}
\end{lemma}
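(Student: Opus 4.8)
The plan is to read all three claims off the definition of $E(A,v)$ as the limit of the non-increasing sequence $\|A - u^{(k)}(v^{(k)})^T\|_C$, combined with the scaling and monotonicity facts already in hand. For \ref{basic_prop_E_i}, non-negativity is immediate, since each term is a value of a norm and a limit of non-negative numbers is non-negative. For the scale invariance I would invoke \lem{basic_prop}~\ref{basic_prop_iii}: running the method from $cv^{(0)}$ yields $\tilde v^{(k)} = c v^{(k)}$ and $\tilde u^{(k)} = (1/c)\, u^{(k)}$, so the rank-one products obey $\tilde u^{(k)}(\tilde v^{(k)})^T = u^{(k)}(v^{(k)})^T$ for every $k$. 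The two error sequences then coincide term by term, hence so do their limits, and $E(A,cv) = E(A,v)$.

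For \ref{basic_prop_E_ii}, I would note that $\phi(A,v) = u^{(1)}$ and therefore $w = \psi(A,\phi(A,v)) = \psi(A,u^{(1)}) = v^{(1)}$, where $u^{(1)}, v^{(1)}$ are the first iterates of the method started at $v^{(0)} = v$. Because $A$ preserves Chebyshev systems, the alternating minimization sequence for the initial point $w$ is unique, and by construction it is precisely the shifted sequence $\{v^{(k+1)}\}, \{u^{(k+1)}\}$. Thus the error sequence defining $E(A,w)$ is the tail of the one defining $E(A,v)$, and a convergent sequence shares its limit with every tail, whence $E(A,w) = E(A,v)$.

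The only statement requiring a genuine argument is \ref{basic_prop_E_iii}. For each fixed $k$ the map $v \mapsto \|A - u^{(k)}(v^{(k)})^T\|_C$ is continuous on $\mathrm{Ch}_n$, because $u^{(k)}$ and $v^{(k)}$ are produced from $v$ by finitely many alternations of the maps $\phi(A,\cdot)$ and $\psi(A,\cdot)$, which are continuous, and the Chebyshev norm is continuous. By \lem{basic_prop}~\ref{basic_prop_i} these functions are pointwise non-increasing in $k$, so $E(A,v) = \inf_k \|A - u^{(k)}(v^{(k)})^T\|_C$ is a pointwise infimum of continuous functions, and the infimum of any family of continuous functions is upper semi-continuous. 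The point to watch, and the only mild obstacle, is that this chain of compositions is legitimate: since $A \in \mathrm{PC}_{m,n}$ every iterate stays Chebyshev, so each application of $\phi(A,\cdot)$ and $\psi(A,\cdot)$ is well-defined and continuous at every stage, while the monotonicity from \ref{basic_prop_i} is exactly what identifies the limit with the infimum and thereby licenses the upper semi-continuity conclusion.
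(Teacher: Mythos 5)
Your proposal is correct and follows essentially the same route as the paper: (i) and (ii) are read off the definition of $E(A,v)$ (via the scaling identity of \lem{basic_prop}~\ref{basic_prop_iii} and the tail-of-a-convergent-sequence observation), and (iii) is the paper's argument that $E(A,\cdot)$ is the limit of a decreasing sequence of continuous functions, hence upper semi-continuous. Your version merely spells out the details the paper leaves implicit, including the point that $A \in \mathrm{PC}_{m,n}$ keeps all iterates Chebyshev so the compositions of $\phi$ and $\psi$ remain continuous.
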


\section{Analysis of the signs in the alternating minimization method}
\label{sec:mainres}

In this section we analyze the behaviour of the signs of components of the vectors $u^{(k)}$ and $v^{(k)}$ that were obtained by the alternating minimization method. More precisely, we prove that the signs are completely determined by the matrix and the signs of the initial point and, moreover, that the signs stabilize for large $k$ (in fact, for $k$ larger than $\min(m,n)$). 

Let $v \in \mathbb R^n$ be a Chebyshev vector. Let $\mathcal S(v)$ denote the vector with components $\mathcal S(v)_i = \sign(v_i)$, $i = 1, \dots, n$. That is, $\mathcal S$ is a mapping from $\mathrm{Ch}_n$ to $\{-1,1\}^n$. 
\begin{theorem}
\label{theorem:SignIndependence}
Let $A \in \mathrm{PC}_{m,n}$. If $v_1, v_2 \in \mathrm{Ch}_n$ and $\mathcal{S}(v_1) = \mathcal{S}(v_2)$, then
\begin{equation*}
    \mathcal{S}(\phi(A, v_1)) = \mathcal{S}(\phi(A, v_2))
\end{equation*}

Similarly, if $u_1, u_2 \in \mathrm{Ch}_m$ and $\mathcal{S}(u_1) = \mathcal{S}(u_2)$, then
\begin{equation*}
    \mathcal{S}(\psi(A, u_1)) = \mathcal{S}(\psi(A, u_2))
\end{equation*}
\begin{proof}
Let $O = \{v \in \mathrm{Ch}_n: \mathcal S(v) = \mathcal S(v_1)\}$. It is clear that $O$ is convex and, therefore, connected. The function $s(v) = \mathcal S(\phi(A,v))$ is continuous on $\mathrm{Ch}_n$, for $\phi(A,v)$ is continuous with respect to $v$ and $\mathcal S$ is locally constant (and, hence, continuous). Since the range of $s$ is discrete, it follows that $s$ is constant on $O$. Thus, $s(v_1) = s(v_2)$, as $v_1, v_2 \in O$. The other statement can be proved analogously.
\end{proof}
\end{theorem}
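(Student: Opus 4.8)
The plan is to reduce the statement to the elementary topological fact that a continuous map from a connected set into a discrete space is necessarily constant. First I would fix the common sign pattern $\sigma = \mathcal S(v_1) = \mathcal S(v_2) \in \{-1,1\}^n$ and introduce the set $O = \{v \in \mathrm{Ch}_n : \mathcal S(v) = \sigma\}$, which is exactly the open orthant $\{v \in \mathbb R^n : \sigma_i v_i > 0 \text{ for all } i\}$. This orthant is convex, hence connected, and it contains both $v_1$ and $v_2$ by hypothesis.

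Next I would verify that the composite map $s(v) = \mathcal S(\phi(A,v))$ is well-defined and continuous on all of $\mathrm{Ch}_n$. It is well-defined because $A$ preserves Chebyshev systems: for every $v \in \mathrm{Ch}_n$ the vector $\phi(A,v)$ lies in $\mathrm{Ch}_m$, so each of its components has an unambiguous nonzero sign. For continuity I would combine two facts already available in the excerpt. The mapping $\phi(A,\cdot)$ is continuous (continuity of $\mu$ was noted to imply continuity of $\phi$), and the sign map $\mathcal S$ is locally constant on $\mathrm{Ch}_m$: near any vector with no vanishing component, small perturbations cannot flip a sign, so $\mathcal S$ is continuous there. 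Composing a continuous map into $\mathrm{Ch}_m$ with a continuous (indeed locally constant) $\mathcal S$ yields a continuous $s \colon \mathrm{Ch}_n \to \{-1,1\}^m$.

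Finally I would conclude: the image $s(O)$ is a connected subset of the discrete space $\{-1,1\}^m$, hence a single point, so $s$ is constant on $O$. Since $v_1, v_2 \in O$, this gives $\mathcal S(\phi(A,v_1)) = s(v_1) = s(v_2) = \mathcal S(\phi(A,v_2))$, which is the first assertion. For the second assertion concerning $\psi$, I would observe that $A \in \mathrm{PC}_{m,n}$ forces $A^T \in \mathrm{PC}_{n,m}$ (the characterizing condition of alternance-free rows and columns is symmetric under transposition) and that $\psi(A,u) = \phi(A^T,u)$; applying the already-established result to $A^T$ then finishes the proof. I do not expect a genuine obstacle here: the only points requiring care are checking that $O$ is connected and that $\mathcal S$ is locally constant on the Chebyshev vectors, both of which are immediate once the open-orthant description of $O$ and the nonvanishing of components are made explicit.
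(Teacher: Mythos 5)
Your proposal is correct and follows essentially the same route as the paper's own proof: the open orthant $O$ is convex hence connected, $s(v) = \mathcal S(\phi(A,v))$ is continuous because $\phi$ is continuous and $\mathcal S$ is locally constant, and a continuous map from a connected set into a discrete space is constant. Your explicit reduction of the $\psi$ statement to the $\phi$ statement via $\psi(A,u) = \phi(A^T,u)$ and $A^T \in \mathrm{PC}_{n,m}$ is just a slightly more detailed version of the paper's ``proved analogously.''
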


\thm{SignIndependence} implies that the signs of the vector components at each next step of the alternating minimization method depend only on the signs of the vector at the previous step. Thus, for a pair of initial points with the same signs the alternating minimization method generates sequences with coinciding signs.




Now we shall analyze the behaviour of the signs more closely. For the rest of this section let us fix a matrix $A \in \mathbb R^{m \times n}$ that preserves Chebyshev systems, an initial vector $v^{(0)} \in \mathrm{Ch}_n$, and the pair of sequences $\{u^{(k)}\}_{k \in \mathbb N}$ and $\{v^{(k)}\}_{k \in \mathbb N}$ that is generated by the alternating minimization method for $A$ and $v^{(0)}$. We introduce a couple of mappings $\mathcal R: \{-1, 1\}^m \rightarrow \{-1, 1\}^n$ and $\mathcal T: \{-1, 1\}^n \rightarrow \{-1,1\}^m$ defined by the formulae
\begin{equation*}
    \mathcal{R}(p) = \mathcal{S}(\psi(A, p)),\;\;\mathcal{T}(q) = \mathcal{S}(\phi(A, q)),
\end{equation*}
where $p \in \{-1, 1\}^m$ and $q \in \{-1, 1\}^n$. Finally, let $\mathcal{V} = \mathcal R \circ \mathcal T$.
The mapping $\mathcal{V}$, thus, specifies how the signs of the vector $v^{(k+1)} = \psi(A, \phi(A, v^{(k)}))$ depend on the signs of $v^{(k)}$ during the alternating minimization method.

We use the mapping $\mathcal{V} $ to construct the \textit{sign transition graph} $G_{A}$ of the matrix $A$. The vertices of this graph are all elements of $\{-1, 1\}^n$, and there is an edge from $t_1$ to $t_2$ if and only if $\mathcal{V}(t_1) = t_2$. Let us show that the graph $G_{A}$ is a set of isomorphic trees, and each tree contains exactly one vertex $t$ such that $\mathcal{V}(t) = t$.

Let $\mathfrak j(i) = \chi(a^i)$ and $\mathfrak i(j) = \chi(a_j)$ (recall, that $\chi(a)$ for an alternance-free vector $a$ denotes the position of the maximum absolute value element).
Note that, due to the assumption on the matrix $A$, we have $|a_{i, \mathfrak j(i)}| > 0$ and $|a_{\mathfrak i(j), j}| > 0$ for all $i,j$. Since $u^{(k+1)} = \phi(A, v^{(k)})$, \thm{preserve_chebyshev}~\ref{pr_ch_ii} implies that
\begin{equation*}
    \sign{u_i^{(k+1)}} = \sign{a_{i, \mathfrak j(i)}} \sign{v_{\mathfrak j(i)}^{(k)}}.
\end{equation*}
Similarly, we have
\begin{equation*}
    \sign{v_j^{(k+1)}} = \sign{a_{\mathfrak i(j), j}} \sign{u_{\mathfrak i(j)}^{(k+1)}}.
\end{equation*}
Thus, from the last two equalities we get
\begin{equation}
\label{eq:v_sign_transition}
    \sign{v_j^{(k+1)}} = \sign{a_{\mathfrak i(j), j}} \cdot \sign{a_{\mathfrak i(j), \mathfrak j(\mathfrak i(j))}} \cdot \sign{v_{\mathfrak j( \mathfrak i(j))}^{(k)}}.
\end{equation}
The equality \eq{v_sign_transition} expresses the signs of the vector $v^{(k+1)}$ in terms of the signs of the vector $v^{(k)}$, that is, it determines the mapping $\mathcal{V}$. Let us introduce the \textit{sign dependency graph} $G^{sd}_A$ of the matrix $A$. The set of vertices of $G^{sd}_A$ is $\{1, \dots, n\}$, and there is an edge from $k$ to $l$ if and only if $k = \mathfrak j(\mathfrak i(l))$.

We consider $G_{A}$ and $G^{sd}_A$ as \textit{directed} graphs. Also the word \textit{acyclic} below means that a graph does not contain directed cycles apart from loops. A vertex $t$ of a graph $G$ such that there is a loop $t \to t$ we shall call \textit{loop vertex}. Finally, by the \textit{depth} of a graph $G$ we denote the maximal possible number $p$ such that there exists a sequence of distinct vertices $t_1, \dots, t_{p}$ such that there is an edge from $t_k$ to $t_{k+1}$ for all $k = 1, \dots, p-1$.

\begin{lemma}
\label{lemma:G_sd_lemma}
    The graph $G^{sd}_A$ fulfills the following properties.
    \begin{enumerate}[label=(\roman*)]
        \item\label{G_sd_i} For each vertex there is exactly one edge that is pointing to it.
        \item\label{G_sd_ii} There is always at least one loop in the graph. If $j$ is a loop vertex, then $\sign(v^{(k)}_j) = \sign(v^{(l)}_j)$ for all $k,l \in \mathbb N$.
        \item\label{G_sd_iii} The graph $G^{sd}_A$ is acyclic.
        \item\label{G_sd_iv} All vertices of the graph $G^{sd}_A$ are reachable from loop vertices (that is, for arbitrary vertex $j$ of $G^{sd}_A$ there is a sequence $j_1, \dots, j_k$ such that $j_k = j$, $j_1$ is a loop vertex, and there is an edge from $j_{s}$ to $j_{s+1}$ for all $s = 1, \dots, k-1$).
        \item\label{G_sd_v} Consider the following process for the matrix $|A|$ (the absolute value is taken element-wise). Take a column, find the maximum element in it, find the maximum element in the corresponding row, then again the maximum element in the corresponding column, and so on. The maximal possible number of distinct columns in a foregoing process is equal to the depth of the graph $G^{sd}_A$.
    \end{enumerate}
\begin{proof} 
    Statement \ref{G_sd_i} trivially follows from the definition of $G^{sd}_A$.
    To prove \ref{G_sd_ii} note, that a vertex $j$ is a loop vertex if and only if $\mathfrak j(\mathfrak i(j)) = j$. Thus, loop vertices are exactly the indices of columns of the matrix $A$ for which the maximum absolute value element is also the maximum absolute value element in its row. It is easy to see such an element in the matrix always exists (e.g., an element $a_{ij}$ such that $|a_{ij}| = \|A\|_C$), so the graph $G^{sd}_A$ contains at least one loop. In addition, if $j$ is a loop vertex, then the equality \eq{v_sign_transition} becomes
    \begin{equation*}
        \sign{v_j^{(k+1)}} = \sign{v_j^{(k)}}.    
    \end{equation*}

    Now we prove \ref{G_sd_iii}. It is clear that $\|a_j\|_\infty \le \|a^{\mathfrak i(j)}\|_\infty$ and ${\|a^i\|_\infty \le \|a_{\mathfrak j(i)}\|_\infty}$, whence $\|a_j\|_\infty \le \|a_{\mathfrak j(\mathfrak i(j))}\|_\infty$.
    Moreover, due to the assumptions on the matrix $A$, if $\mathfrak j(\mathfrak i(j)) \neq j$, then
    \begin{equation*}
        \|a_j\|_\infty < \|a_{\mathfrak j(\mathfrak i(j))}\|_\infty.
    \end{equation*}
    Thus, a directed cycle in $G^{sd}_A$ has to be a loop.
    
    Finally, \ref{G_sd_iv} easily follows from \ref{G_sd_i} and \ref{G_sd_iii}, and \ref{G_sd_v} is clear in view of the proof of the statement \ref{G_sd_iii}.
\end{proof}
\end{lemma}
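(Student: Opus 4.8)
The plan is to organize the entire argument around the single auxiliary map $f\colon \{1,\dots,n\}\to\{1,\dots,n\}$ defined by $f(l)=\mathfrak j(\mathfrak i(l))$, which by the definition of $G^{sd}_A$ sends each vertex to the source of its unique incoming edge. Property~\ref{G_sd_i} is then immediate: since $A$ preserves Chebyshev systems, every row and column is alternance-free, so $\mathfrak i$ and $\mathfrak j$ are genuinely single-valued, and hence $f(l)$ is the one and only predecessor of $l$.

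For~\ref{G_sd_ii} I would first note that $j$ is a loop vertex exactly when $f(j)=j$, i.e.\ when the entry of largest modulus in column $j$ is also of largest modulus in its own row. To exhibit at least one such vertex I would pick any entry $a_{ij}$ with $|a_{ij}|=\|A\|_C$: being globally maximal, it is maximal both along its row and along its column, so its column index is a fixed point of $f$. The sign assertion is then read off from~\eqref{eq:v_sign_transition}, which for a loop vertex collapses to $\sign v_j^{(k+1)}=\sign v_j^{(k)}$, yielding constancy of the sign over all~$k$.

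The core of the lemma is~\ref{G_sd_iii}, and this is where I expect the main obstacle to lie. The idea is to track how the quantity $\|a_\bullet\|_\infty$ behaves along edges. Since the maximal modulus of column $j$ is attained in row $\mathfrak i(j)$, one has $\|a_j\|_\infty\le\|a^{\mathfrak i(j)}\|_\infty$, and dually $\|a^i\|_\infty\le\|a_{\mathfrak j(i)}\|_\infty$; chaining the two gives $\|a_j\|_\infty\le\|a_{f(j)}\|_\infty$. The delicate point is upgrading this to a \emph{strict} inequality whenever $f(j)\neq j$: equality would force the maximal modulus of row $\mathfrak i(j)$ to be attained simultaneously at column $f(j)$ and at column $j$, contradicting the alternance-free hypothesis, which guarantees a unique maximal entry in each row. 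Strict monotone growth of $\|a_\bullet\|_\infty$ along every non-loop edge then rules out any directed cycle except a loop, which is precisely acyclicity.

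Finally, \ref{G_sd_iv} and \ref{G_sd_v} I would derive from the combinatorics of $f$. For \ref{G_sd_iv}, starting from any vertex $j$ and iterating the predecessor map produces the sequence $j, f(j), f^2(j),\dots$, which in a finite graph is eventually periodic; by \ref{G_sd_iii} its period must be a loop, so the backward chain terminates at a loop vertex, and reversing the chain exhibits a directed path from that loop vertex to $j$. For \ref{G_sd_v}, I would identify the described search on $|A|$ with the very same iteration: a search started at column $j_0$ visits precisely $j_0, f(j_0), f^2(j_0),\dots$, and the distinct columns it meets before repeating form, when read in reverse, a directed path beginning at a loop vertex, so their number equals the length of that path. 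Since every directed path extends upward through $f$ to a loop-terminated chain without shrinking, the maximal number of distinct columns coincides with the longest directed path, i.e.\ with the depth of $G^{sd}_A$.
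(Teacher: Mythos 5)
Your proposal is correct and follows essentially the same route as the paper's proof: loops are produced from a globally maximal entry, the sign claim is read off from \eqref{eq:v_sign_transition}, acyclicity comes from strict growth of $\|a_j\|_\infty$ along non-loop edges (with strictness forced by the alternance-free rows), and \ref{G_sd_iv}--\ref{G_sd_v} follow by iterating the predecessor map $\mathfrak j \circ \mathfrak i$. The only difference is expository: you make explicit the fixed-point iteration argument and the identification of the search process with iterates of $f$, which the paper leaves as ``easily follows'' and ``is clear.''
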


\begin{lemma}
\label{lemma:G_V_basic}
    The graph $G_{A}$ satisfies the following properties.
    \begin{enumerate}[label=(\roman*)]
        \item\label{basicI} For each vertex in $G_{A}$ there is exactly one edge that is pointing out of it.
        \item\label{basicII} Each sequence of vertices $t_1, t_2, \dots$, such that there is an edge $t_k \to t_{k+1}$ for all $k$, stabilizes.
        \item\label{basicIII} $G_{A}$ is acyclic and its depth does not exceed the depth of $G^{sd}_A$.
    \end{enumerate}
    \begin{proof}
        Statement \ref{basicI} is an immediate consequence of the definition of $G_{A}$. To prove \ref{basicII} without loss of generality we can assume that $\mathcal S(v^{(0)}) = t_1$. Observe that $\mathcal S(v^{(k)}) = t_k$ for all $k \in \mathbb N$. The equation \eq{v_sign_transition} implies that $\mathcal S(v^{(k+1)})$ depends only on the components of the vector $\mathcal S(v^{(k)})$ with indices in the set $\{\mathfrak j(\mathfrak i(j)): j = 1, \dots, n\}$. Thus, it easily follows that $t_{p} = t_{p+1} = \dots$, where $p$ denotes the depth of $G^{sd}_A$. Indeed, vector $\mathcal S(v^{(k+p-1)})$ depends only on the components of $\mathcal S(v^{(k)})$ on the set
        \begin{equation*}
            F = \{(\mathfrak j \circ \mathfrak i)^{p-1}(j): j = 1, \dots, n\},
        \end{equation*}
        which contains exactly loop vertices of $G^{sd}_A$ by \lem{G_sd_lemma}~\ref{G_sd_v}. Observe that \lem{G_sd_lemma}~\ref{G_sd_ii} implies that the components of vectors $\mathcal S(v^{(k)})$ on $F$ do not depend on $k$, implying that $t_{k} = t_{p}$ for all $k \ge p$. Hence, the proof of \ref{basicII} is complete, and it is also evident that the depth of $G_{A}$ does not exceed $p$. The acyclicity of $G_{A}$ easily follows from \ref{basicI} and \ref{basicII}.
    \end{proof}
\end{lemma}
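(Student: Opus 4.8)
The plan is to dispose of property~\ref{basicI} immediately and then reduce both~\ref{basicII} and~\ref{basicIII} to the structural facts about the sign dependency graph $G^{sd}_A$ already established in \lem{G_sd_lemma}, using the explicit transition rule~\eq{v_sign_transition}. Property~\ref{basicI} is a tautology: $\mathcal V$ is a single-valued map on $\{-1,1\}^n$, so each vertex $t$ has the unique outgoing edge $t \to \mathcal V(t)$.

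For~\ref{basicII} I would first realize an abstract edge-sequence $t_1, t_2, \dots$ (with $t_{k+1} = \mathcal V(t_k)$) as a genuine orbit of the method: choosing any $v^{(0)} \in \mathrm{Ch}_n$ with $\mathcal S(v^{(0)}) = t_1$ gives $t_k = \mathcal S(v^{(k-1)})$ for all $k$, which is exactly what $\mathcal V = \mathcal R \circ \mathcal T$ records. The crux is then~\eq{v_sign_transition}: it shows that $\sign(v_j^{(k+1)})$ depends on $v^{(k)}$ only through its sign at the single index $\mathfrak j(\mathfrak i(j))$, i.e.\ through the parent of $j$ in $G^{sd}_A$. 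Telescoping this dependence $p-1$ times, where $p$ is the depth of $G^{sd}_A$, shows that $\mathcal S(v^{(k+p-1)})$ is determined by the signs of $v^{(k)}$ on the set $F = \{(\mathfrak j \circ \mathfrak i)^{p-1}(j) : j = 1, \dots, n\}$.

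The two ingredients that close the argument both come from \lem{G_sd_lemma}. By part~\ref{G_sd_v}, iterating $\mathfrak j \circ \mathfrak i$ exactly $p-1$ times sends every index into a loop vertex, so $F$ is precisely the set of loop vertices; and by part~\ref{G_sd_ii}, the signs of the iterates at loop vertices are constant in $k$. Hence $\mathcal S(v^{(k+p-1)})$ does not depend on $k$, which forces $t_p = t_{p+1} = \cdots$. This proves stabilization and simultaneously bounds the length of any path of distinct vertices by $p$, giving the depth estimate in~\ref{basicIII}. Acyclicity (apart from loops) is then formal: a nontrivial directed cycle would produce a periodic, non-stabilizing edge-sequence, contradicting~\ref{basicII}.

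The main obstacle is the telescoping step itself — one must verify carefully that iterating~\eq{v_sign_transition} collapses the dependence of $\sign(v_j^{(k+p-1)})$ onto the single coordinate $(\mathfrak j \circ \mathfrak i)^{p-1}(j)$, and that this coordinate is a loop vertex. This is where the full strength of \lem{G_sd_lemma} — especially the identification in~\ref{G_sd_v} of the image of $(\mathfrak j \circ \mathfrak i)^{p-1}$ with the loop vertices, together with the $k$-invariance of loop-vertex signs in~\ref{G_sd_ii} — is indispensable. Once this reduction is in hand, stabilization, the depth bound, and acyclicity all follow by routine bookkeeping.
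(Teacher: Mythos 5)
Your proposal is correct and follows essentially the same route as the paper's proof: disposing of \ref{basicI} as a consequence of $\mathcal V$ being single-valued, realizing an edge-sequence as an orbit of the method, telescoping \eq{v_sign_transition} through $p-1$ steps so that $\mathcal S(v^{(k+p-1)})$ depends only on the signs at the set $F$ of loop vertices (via \lem{G_sd_lemma}~\ref{G_sd_v}), invoking the $k$-invariance of those signs (via \lem{G_sd_lemma}~\ref{G_sd_ii}) to force stabilization, and deducing the depth bound and acyclicity from \ref{basicI} and \ref{basicII}. No gaps; the argument matches the paper's in both structure and the lemmas it relies on.
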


\lem{G_V_basic} implies that for each vertex $t \in \{-1, 1\}^n$ there is a unique sequence $t_1, t_2, \dots$ such that $t = t_1$ and for all $k \in \mathbb N$ there is an edge $t_k \to t_{k+1}$. Since this sequence stabilizes, it is possible to define $f(t)$ as the vector which is equal to $t_k$ for arbitrary big $k$. By definition, for all $t$ there is a path in $G_{A}$ from $t$ to $f(t)$, and $f(t)$ is a loop vertex. In what follows the term {\it{connected component}} of a directed graph refers to weakly connected components (i.e. we allow to connect vertices with paths regardless of edge direction).

\begin{lemma}
The following statements hold.
\label{lemma:G_V_components}
    \begin{enumerate}[label=(\roman*)]
    \item\label{comp_i} A pair of vertices $t_1$ and $t_2$ in $G_{A}$ belong to the same connected component if and only if $f(t_1) = f(t_2)$.
    \item\label{comp_ii} If $t_1$ and $t_2$ are loop vertices in $G_{A}$ and $(t_1)_l = (t_2)_l$ for all loop vertices $l$ in $G^{sd}_A$, then $t_1 = t_2$.
    \item\label{comp_iii} Let $t_1$ and $t_2$ be loop vertices in $G_A$. Let $l$ and $j$ be vertices in $G^{sd}_A$ such that $l$ is a loop vertex and $j$ is reachable from $l$ (see \lem{G_sd_lemma}~\ref{G_sd_iii}). Then
    \begin{equation*}
        (t_1)_l / (t_1)_j = (t_2)_l / (t_2)_j.
    \end{equation*}
    \end{enumerate}
\end{lemma}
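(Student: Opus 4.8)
The plan is to reduce all three statements to a single \emph{local} relation extracted from the sign transition equation \eq{v_sign_transition}. Writing $t = \mathcal S(v^{(k)})$, that equation reads $\mathcal V(t)_j = \gamma_j\, t_{\mathfrak j(\mathfrak i(j))}$, where $\gamma_j = \sign(a_{\mathfrak i(j),j})\,\sign(a_{\mathfrak i(j),\mathfrak j(\mathfrak i(j))}) \in \{-1,1\}$ depends only on $A$. Since there is an edge $k \to j$ in $G^{sd}_A$ precisely when $k = \mathfrak j(\mathfrak i(j))$, this says: if $t$ is a loop vertex of $G_A$ (so $\mathcal V(t) = t$), then
\begin{equation*}
    t_j = \gamma_j\, t_k \qquad \text{for every edge } k \to j \text{ of } G^{sd}_A.
\end{equation*}
I would record at the outset the structure of $G^{sd}_A$: by \lem{G_sd_lemma}~\ref{G_sd_i} and~\ref{G_sd_iii} every vertex has a unique incoming edge and the graph is acyclic, so $G^{sd}_A$ is a disjoint union of out-trees whose roots are exactly the loop vertices; in particular, whenever $j$ is reachable from a loop vertex $l$, there is a (unique) directed path $l = k_0 \to k_1 \to \dots \to k_p = j$.

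For \ref{comp_i} I would use only that $G_{A}$ is a functional graph: by \lem{G_V_basic}~\ref{basicI} every vertex has out-degree one, and by~\ref{basicII} the trajectory $t, \mathcal V(t), \mathcal V^2(t), \dots$ stabilizes at the loop vertex $f(t)$. The key observation is that every directed edge preserves $f$: if $\mathcal V(t) = t'$, then the trajectory of $t$ is $t, t', \mathcal V(t'), \dots$, so $f(t) = f(t')$. Hence any two vertices joined by an edge (in either direction) have equal $f$-value, and therefore any two vertices in the same weakly connected component have equal $f$-value. Conversely, if $f(t_1) = f(t_2)$, the two directed paths to this common loop vertex exhibit $t_1$ and $t_2$ in one weakly connected component. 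This gives the claimed equivalence.

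For \ref{comp_iii}, let $t$ be any loop vertex of $G_{A}$ and let $l = k_0 \to \dots \to k_p = j$ be the path from the loop vertex $l$ to the reachable vertex $j$. Iterating the local relation along the path yields $t_j = \bigl(\prod_{s=1}^{p}\gamma_{k_s}\bigr) t_l$, whence $t_l/t_j = \prod_{s=1}^{p}\gamma_{k_s}$, using $t_l, t_j \in \{-1,1\}$. The right-hand side depends only on $A$ and the path, not on the chosen loop vertex $t$, so $(t_1)_l/(t_1)_j = (t_2)_l/(t_2)_j$ for any two loop vertices $t_1, t_2$ of $G_{A}$. Finally, \ref{comp_ii} follows from the same computation: by \lem{G_sd_lemma}~\ref{G_sd_iv} every vertex $j$ is reachable from some loop vertex $l(j)$ of $G^{sd}_A$, so $t_j = \bigl(\prod \gamma\bigr)\, t_{l(j)}$ with a factor independent of the chosen loop vertex of $G_A$; if $t_1$ and $t_2$ agree on all loop vertices of $G^{sd}_A$, then they agree on every $j$, i.e. $t_1 = t_2$.

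The main obstacle, and the place requiring care, is the correct derivation and bookkeeping of the local relation $t_j = \gamma_j t_k$ together with the two distinct graph structures involved: the \emph{functional} (out-degree one) structure of $G_{A}$, which drives \ref{comp_i}, versus the \emph{co-functional} (in-degree one) out-tree structure of $G^{sd}_A$, which makes the products $\prod \gamma$ well defined and path-independent in \ref{comp_ii} and \ref{comp_iii}. Once the local relation and the loop-to-$j$ paths are in hand, all three parts reduce to short inductions along paths.
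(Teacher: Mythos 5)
Your proof is correct and follows essentially the same route as the paper's: part \ref{comp_i} via the observation that every edge of $G_A$ preserves $f$, and parts \ref{comp_ii} and \ref{comp_iii} via the local sign relation extracted from \eq{v_sign_transition} at loop vertices, propagated along directed paths in $G^{sd}_A$. The only difference is organizational --- you package the paper's two separate inductions (BFS layers $F_k$ for \ref{comp_ii}, path induction for \ref{comp_iii}) into a single explicit product formula $t_l/t_j = \prod_s \gamma_{k_s}$ and deduce \ref{comp_ii} as a corollary, which is a clean but minor repackaging of the same argument.
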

\begin{proof}
To prove \ref{comp_i} assume that $f(t_1) = f(t_2)$. Then $t_1$ and $t_2$ belong to the same connected component, since both $t_1$ and $t_2$ can be connected with $f(t_1) = f(t_2)$ with a path. The converse easily follows from the following observation: if there is an edge $t_1 \to t_2$ or $t_2 \to t_1$, then $f(t_1) = f(t_2)$. 

Now let $t_1$ and $t_2$ satisfy the assumptions of \ref{comp_ii}. Let $F_k \subset \{1, \dots, n\}$ denote the set of vertices $l$ of $G^{sd}_A$ such that there is a directed path from a loop vertex to $l$ whose length does not exceed $k$. So, $F_0$ consists of all loop vertices of $G^{sd}_A$. \lem{G_sd_lemma}~\ref{G_sd_iv} implies that
\begin{equation*}
    \bigcup\limits_{k \ge 0} F_k = \{1, \dots, n\}.
\end{equation*}
By the assumption we have $(t_1)_j = (t_2)_j$ for all $j \in F_0$. We finish the proof by showing that the equality $(t_1)_j = (t_2)_j$ for all $j \in F_k$ implies $(t_1)_j = (t_2)_j$ for all $j \in F_{k+1}$. Indeed, if $j \in F_{k+1}$, then by definition there exists $s \in F_{k}$ such that $\mathfrak j(\mathfrak i(j)) = s$. Since $t_1$ is a loop vertex, \eq{v_sign_transition} implies that $(t_1)_j = q (t_1)_s$, where $q = \sign{a_{\mathfrak i(j), j}} \cdot \sign{a_{\mathfrak i(j), s}}$. By the same reason, $(t_2)_j = q (t_2)_s$. Finally, since $s \in F_k$, we have $(t_1)_s = (t_2)_s$ and, consequently, $(t_1)_j = (t_2)_j$.

Finally, we prove \ref{comp_iii}. Let $j_1, \dots, j_k$ be chosen such that $j_1 = l$, $j_k = j$, and for all $s = 1, \dots, k-1$ there is an edge from $j_s$ to $j_{s+1}$. It is clear that
\begin{equation*}
(t_1)_l/(t_1)_{j_1} = (t_2)_l/(t_2)_{j_1}.
\end{equation*}
We prove that the equality
\begin{equation*}
(t_1)_l/(t_1)_{j_s} = (t_2)_l/(t_2)_{j_s}
\end{equation*}
implies
\begin{equation*}
(t_1)_l/(t_1)_{j_{s+1}} = (t_2)_l/(t_2)_{j_{s+1}}
\end{equation*}
(this observation proves~\ref{comp_iii} by induction). Indeed, from \eq{v_sign_transition} it follows that
\begin{equation*}
    (t_1)_{j_{s+1}} = \sign a_{\mathfrak i(j_{s+1}), j_{s+1}} \cdot \sign a_{\mathfrak i(j_{s+1}), j_s} \cdot (t_1)_{j_s}.    
\end{equation*}
It follows that
\begin{equation*}
(t_1)_{j_s} / (t_1)_{j_{s+1}} = \sign a_{\mathfrak i(j_{s+1}), j_{s+1}} \cdot \sign a_{\mathfrak i(j_{s+1}), j_s}.
\end{equation*}
Obviously, for $t_2$ the same equality
\begin{equation*}
(t_2)_{j_s} / (t_2)_{j_{s+1}} = \sign a_{\mathfrak i(j_{s+1}), j_{s+1}} \cdot \sign a_{\mathfrak i(j_{s+1}), j_s}
\end{equation*}
holds. Thus, combining the obtained equalities
\begin{equation*}
    (t_1)_l/(t_1)_{j_s} = (t_2)_l/(t_2)_{j_s} \quad \text{and} \quad (t_1)_{j_s} / (t_1)_{j_{s+1}} = (t_2)_{j_s} / (t_2)_{j_{s+1}}
\end{equation*}
we get that $(t_1)_l / (t_1)_{j_{s+1}} = (t_2)_l / (t_2)_{j_{s+1}}$.
\end{proof}

Before proceeding to the concluding theorem on the structure of $G_{A}$ we define one auxiliary function $d:\{-1, 1\}^n \to \{-1, 1\}^n$ by the equality
\begin{equation*}
    d(t)_i = t_i/f(t)_i.
\end{equation*}
That is, $d(t)_i$ is equal to $1$ if $t_i$ coincides with $f(t)_i$ and $-1$ otherwise. Note that we do not interpret the value $d(t)$ as a vertex of $G_{A}$. The equality \eq{v_sign_transition} applied simultaneously to $t$ and $f(t)$ (in view of the fact that $\mathcal V(f(t)) = f(t)$) implies that 
\begin{equation}
\label{eq:basic_property_d}
    d(\mathcal V(t))_j = d(t)_{\mathfrak j (\mathfrak i(j))}
\end{equation}
for all $j \in \{1, \dots, n\}$.
Also, the definition of $d$ implies that $d(t)_j = 1$ for all loop vertices $j$ of $G_A^{sd}$. Finally, $t \in \{-1, 1\}^n$ is a loop vertex of $G_A$ if and only if $t = f(t)$ which is equivalent to the fact that $d(t)_i = 1$ for all $i$.

\begin{theorem}\label{theorem:sign_thm}
Let the matrix $A \in \mathbb{R}^{m \times n}$ preserve Chebyshev systems. Denote by $F$ the set of all numbers $j \in \{1, \dots, n\}$ such that the maximum absolute value element in the column $a_j$ is also the maximum absolute value element in its row. Let $k = |F|$. Then the following statements hold.
\begin{enumerate}[label=(\roman*)]
    \item\label{thm_i} A pair of vertices $t_1, t_2 \in \{-1, 1\}^n$ of $G_{A}$ belong to the same connected component if and only if $(t_1)_j = (t_2)_j$ for all $j \in F$.
    \item\label{thm_ii} Each connected component of $G_{A}$ has $2^{n-k}$ elements (so there are exactly $2^k$ components in $G_{A}$). Also each connected component is a tree and contains exactly one loop vertex.
    \item\label{thm_iii} All components of $G_{A}$ are isomorphic. More precisely, let $C_1$ and $C_2$ be a pair of connected components. Then for arbitrary vertex $t_1$ of $C_1$ there is exactly one vertex $t_2$ of $C_2$ such that $d(t_2) = d(t_1)$. The mapping $g$, that maps vertices of $C_1$ into vertices of $C_2$ and satisfies $d(g(t)) = d(t)$ for all vertices $t$ of $C_1$, is well-defined and is an isomorphism from $C_1$ to $C_2$.
    
    \item\label{thm_iv} The depth of $G_{A}$ is equal to the depth of arbitrary connected component of $G_{A}$ and is also equal to the depth of $G^{sd}_A$ (which was computed in the statement \ref{G_sd_v} of \lem{G_sd_lemma}).
\end{enumerate}

\begin{proof}
    In view of \lem{G_V_components}, \ref{thm_i} is equivalent to the following statement: $f(t_1) = f(t_2)$ if and only if $(t_1)_j = (t_2)_j$ for all $j \in F$. Assume that ${f(t_1) = f(t_2)}$. \lem{G_sd_lemma}~\ref{G_sd_ii} implies that $(t_1)_j = f(t_1)_j$ and $(t_2)_j = f(t_2)_j$ for all $j \in F$, since $F$ is exactly the set of all loop vertices in $G^{sd}_A$. Thus, $(t_1)_j = (t_2)_j$ for all $j \in F$. Now assume that $(t_1)_j = (t_2)_j$ for all $j \in F$. By the same argument as above, we have
    \begin{equation*}
        f(t_1)_j = (t_1)_j = (t_2)_j = f(t_2)_j
    \end{equation*}
    for all $j \in F$. Thus, \lem{G_V_components}~\ref{comp_ii} implies that $f(t_1) = f(t_2)$, as $f(t_1)$ and $f(t_2)$ are loop vertices of $G_A$.

    The number of elements in a connected component is equal to the number of mappings from $\{1, \dots, n\} \setminus F$ to $\{-1, 1\}$, that is to the number $2^{n-k}$. Also, \lem{G_V_basic}~\ref{basicI} and~\ref{basicIII} imply that all components of $G_{A}$ are trees. Finally, it is easy to see that each component indeed contains at least one loop vertex (take arbitrary vertex $t$ and observe that $f(t)$ is a loop vertex that belongs to the same component). Thus, if $t_1$ and $t_2$ are loop vertices that belong to the same component, then
    \begin{equation*}
        t_1 = f(t_1) = f(t_2) = t_2.
    \end{equation*}
    Hence, \ref{thm_ii} is proved.

    Before proving \ref{thm_iii} note that the range of $d$ is contained in the set
    \begin{equation*}
        D = \{t \in \{-1, 1\}^n: t_l = 1\; \forall l \in F\},
    \end{equation*}
    which has exactly $2^{n-k}$ elements. By definition it is also clear that $d$ is injective on each connected component of $G_{A}$, which has the same number of elements. Thus, $d$ maps each component bijectively onto $D$. Hence, if $C_1$ and $C_2$ are connected components of $G_{A}$, then there exists a unique mapping $g$ that maps vertices of $C_1$ into vertices of $C_2$ and satisfies $d(g(t)) = d(t)$ for all vertices $t$ of $C_1$. It is also clear that $g$ is bijective. Thus, it remains to show that $g$ is a graph isomorphism, i.e. if there is an edge from $t_1$ to $t_2$, then there is an edge from $g(t_1)$ to $g(t_2)$ (note that the converse is not necessary to prove, since $C_1$ and $C_2$ are interchangeable). Observe, that \eq{basic_property_d} implies that $d(\mathcal{V}(t_1)) = d(\mathcal{V}(t_2))$ when $d(t_1) = d(t_2)$. Finally, assume that there is an edge from $t_1$ to $t_2$, where $t_1$ and $t_2$ are vertices in $C_1$. Then $\mathcal V(t_1) = t_2$ and, therefore,
    \begin{equation*}
        d(\mathcal V(g(t_1))) = d(\mathcal V(t_1)) = d(t_2) = d(g(t_2)).
    \end{equation*}
    Thus, $\mathcal V(g(t_1)) = g(t_2)$, so $g(t_1)$ and $g(t_2)$ are connected with an edge.

    Since all components of $G_{A}$ have the same depth, it remains to prove that the depth of $G_{A}$ is equal to the depth of $G^{sd}_A$. Let $p$ denote the depth of $G^{sd}_A$. In the statement \ref{basicIII} of \lem{G_V_basic} it is already proved that the depth of $G_{A}$ does not exceed $p$. Let $j_1, \dots, j_p$ be distinct elements of $\{1, \dots, n\}$ such that $\mathfrak j(\mathfrak i(j_k)) = j_{k-1}$ for all $k = 2, \dots, p$. That is, $j_1, \dots, j_p$ is one of the longest possible paths in $G^{sd}_A$ (in this case $j_1$ is a loop vertex). Consider arbitrary $t \in \{-1, 1\}^n$ such that $d(t)_{j_2} = -1$. Applying the equality \eq{basic_property_d} $k$ times we get that $d(\mathcal V^k(t))_{j_{k+2}} = -1$ for $k = 0, 1, \dots, p-2$. Therefore, $\mathcal V^k(t)$ is not a loop vertex for $k = 0,1, \dots, p-2$ and acyclicity of $G_{A}$ implies that $t, \mathcal V(t), \dots, \mathcal V^{p-1}(t)$ are distinct. Thus, the depth of $G_{A}$ is greater or equal then $p$.
\end{proof}
\end{theorem}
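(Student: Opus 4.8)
The plan is to reduce every assertion to facts already established about the maps $f$ and $d$ and the two auxiliary graphs, leaning throughout on the single structural identity \eq{basic_property_d}. For part~\ref{thm_i}, I would first invoke \lem{G_V_components}~\ref{comp_i}, which already says that $t_1$ and $t_2$ lie in the same component exactly when $f(t_1) = f(t_2)$; so it suffices to show that $f(t_1)=f(t_2)$ if and only if $t_1$ and $t_2$ agree on $F$. The starting observation is that $F$ is precisely the set of loop vertices of $G^{sd}_A$, which is the content of \lem{G_sd_lemma}~\ref{G_sd_ii}. For the forward direction, that same statement gives $(t_i)_j = f(t_i)_j$ for every $j \in F$, so $f(t_1)=f(t_2)$ forces agreement on $F$. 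For the converse, agreement on $F$ yields $f(t_1)_j = f(t_2)_j$ for $j \in F$, and since $f(t_1)$ and $f(t_2)$ are loop vertices of $G_A$, \lem{G_V_components}~\ref{comp_ii} upgrades this to $f(t_1)=f(t_2)$.

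For part~\ref{thm_ii}, the counting is immediate from~\ref{thm_i}: a component is determined by fixing the $k$ coordinates indexed by $F$ and is then in bijection with all sign choices on the remaining $n-k$ coordinates, giving $2^{n-k}$ vertices and $2^{k}$ components. That each component is a tree follows from \lem{G_V_basic}~\ref{basicI} and~\ref{basicIII} (out-degree one together with acyclicity). Existence of a loop vertex in each component comes from the fact that $f(t)$ is always a loop vertex sharing the component of $t$, while uniqueness follows because two loop vertices in the same component satisfy $t_1 = f(t_1) = f(t_2) = t_2$.

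Part~\ref{thm_iii} is where the main work lies. The key point is that the range of $d$ is contained in $D = \{t \in \{-1,1\}^n : t_l = 1\ \forall l \in F\}$, a set of exactly $2^{n-k}$ elements, and that $d$ is injective on each component (within a component $f$ is constant, and $t$ is recovered from $d(t)$ and $f(t)$ via $t_i = d(t)_i f(t)_i$). Since a component also has $2^{n-k}$ vertices, $d$ restricts to a bijection of each component onto $D$; defining $g$ on $C_1$ as $d|_{C_2}^{-1} \circ d|_{C_1}$ then makes $g$ the unique bijection with $d(g(t)) = d(t)$, and in particular establishes the existence-and-uniqueness claim for the matching vertex $t_2$. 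The crux is to verify that $g$ preserves edges: if $d(t_1) = d(t_2)$, then \eq{basic_property_d} gives $d(\mathcal V(t_1)) = d(\mathcal V(t_2))$, so an edge $t \to \mathcal V(t)$ in $C_1$ is carried to $g(t) \to \mathcal V(g(t)) = g(\mathcal V(t))$ in $C_2$. I expect this edge-preservation step—tying the combinatorial identity \eq{basic_property_d} to the definition of $g$—to be the main obstacle, since everything else is bookkeeping.

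Finally, for part~\ref{thm_iv}, all components being isomorphic forces them to share a common depth, so I only need the depth of $G_A$ to equal $p$, the depth of $G^{sd}_A$. The inequality $\le p$ is already \lem{G_V_basic}~\ref{basicIII}. For the reverse, I would take a longest path $j_1, \dots, j_p$ in $G^{sd}_A$ (so $\mathfrak j(\mathfrak i(j_k)) = j_{k-1}$ and $j_1$ is a loop vertex), pick any $t$ with $d(t)_{j_2} = -1$, and iterate \eq{basic_property_d} to obtain $d(\mathcal V^{k}(t))_{j_{k+2}} = -1$ for $k = 0, \dots, p-2$. Hence none of $\mathcal V^{0}(t), \dots, \mathcal V^{p-2}(t)$ is a loop vertex, and acyclicity of $G_A$ forces $t, \mathcal V(t), \dots, \mathcal V^{p-1}(t)$ to be distinct, exhibiting a path of depth $p$.
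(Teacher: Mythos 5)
Your proposal is correct and follows essentially the same route as the paper's proof: part~\ref{thm_i} via \lem{G_V_components} and \lem{G_sd_lemma}~\ref{G_sd_ii}, part~\ref{thm_ii} by the same counting and tree argument, part~\ref{thm_iii} via the bijection of $d$ onto $D$ and edge-preservation through \eq{basic_property_d}, and part~\ref{thm_iv} by propagating $d(t)_{j_2}=-1$ along a longest path of $G^{sd}_A$. Your added justification that $d$ is injective on a component (recovering $t$ from $t_i = d(t)_i f(t)_i$ with $f$ constant on the component) makes explicit a detail the paper leaves as ``clear by definition,'' but the argument is otherwise the same.
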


\begin{figure}
\centering
\begin{subfigure}{.5\textwidth}
  \centering
  \includegraphics[width=.98\linewidth]{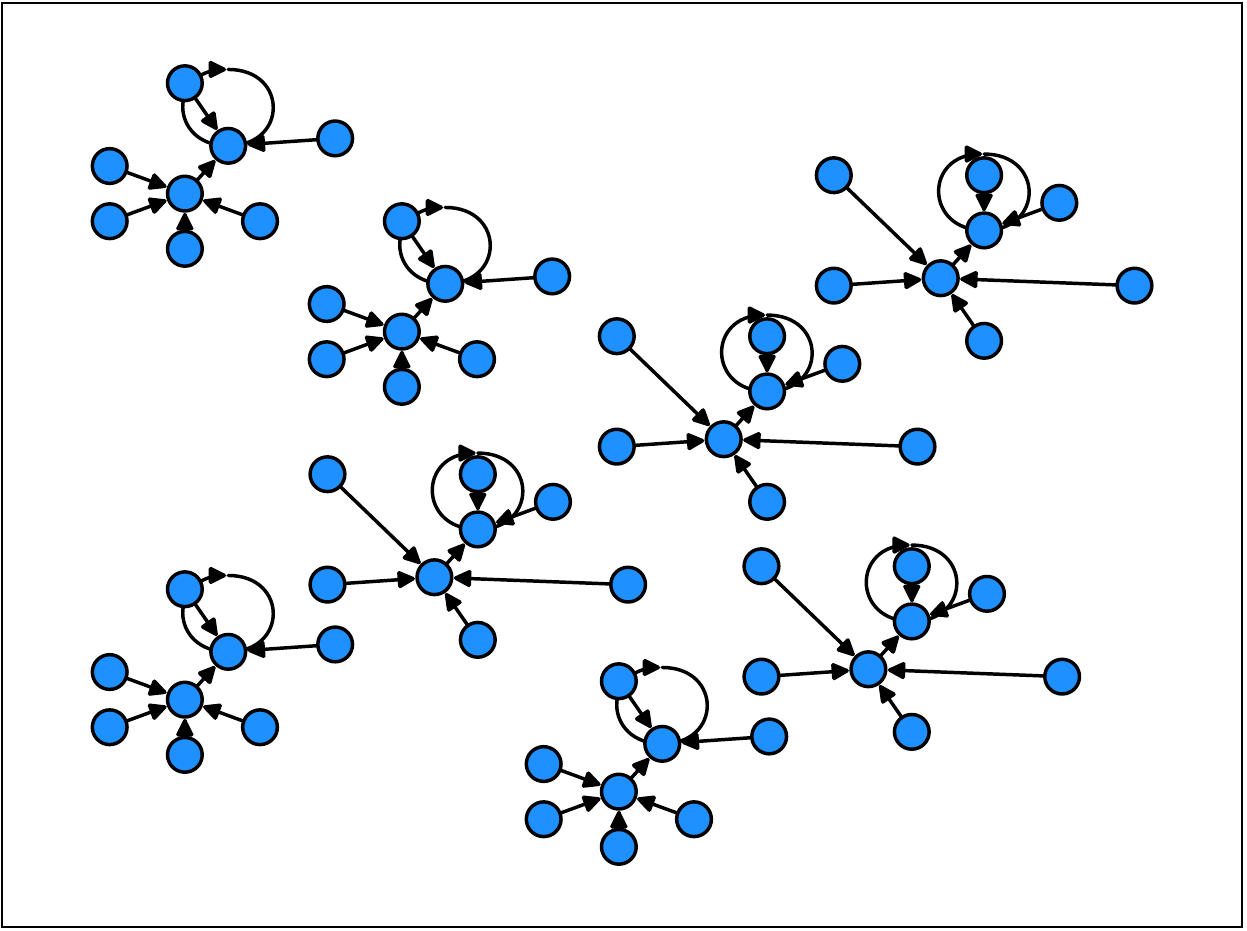}
\end{subfigure}%
\begin{subfigure}{.5\textwidth}
  \centering
  \includegraphics[width=.98\linewidth]{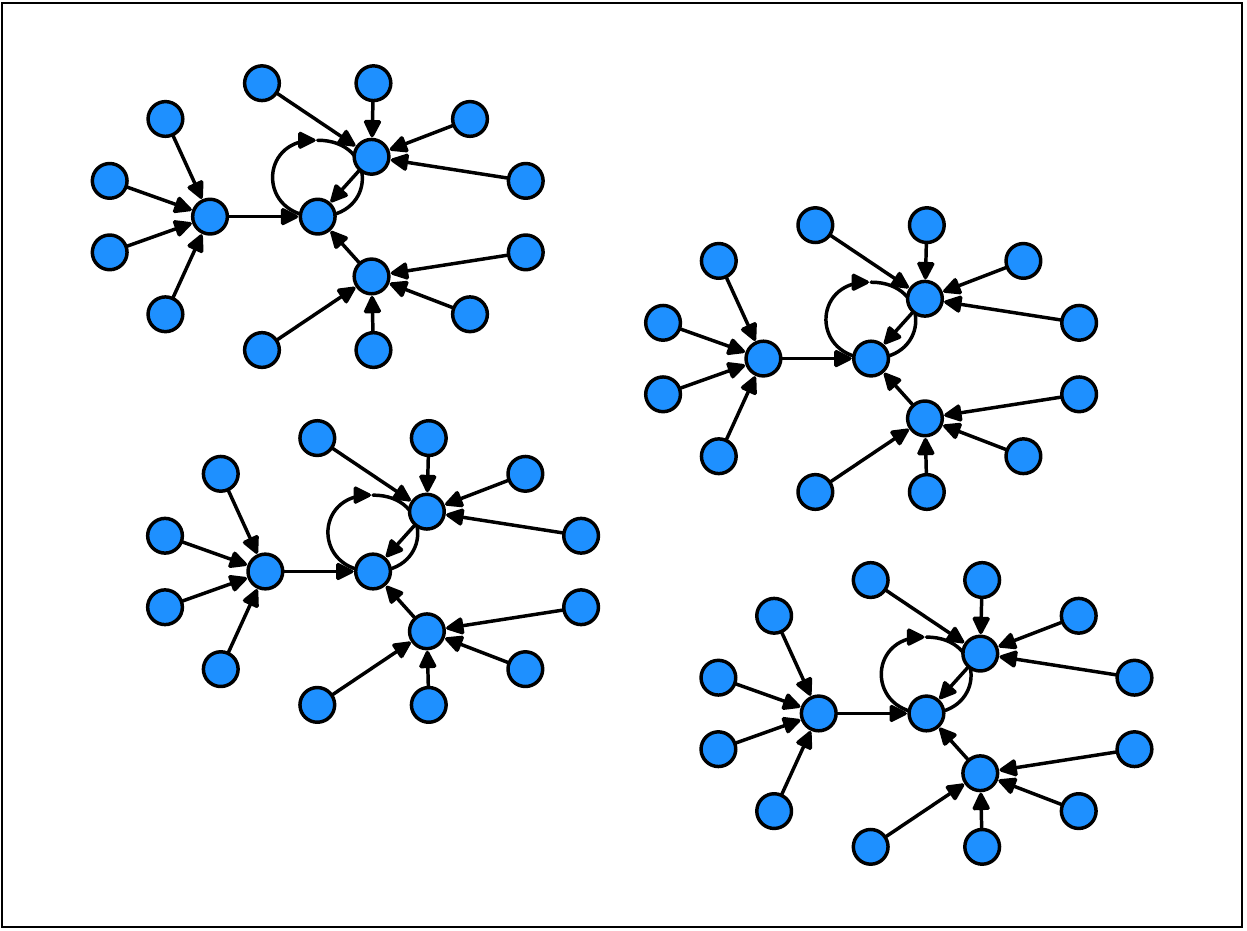}
\end{subfigure}
\caption{Examples of sign transition graphs for random matrices.}
\label{fig:graph_examples}
\end{figure}

Examples of sign transition graphs are shown in \fig{graph_examples}.

\begin{remark}
    1. It is easy to describe connected components of the set $\mathrm{PC}_{m,n}$. Namely, each of the components is convex and matrices ${A,B \in \mathrm{PC}_{m,n}}$ belong to the same component if and only if $\chi(a^i) = \chi(b^i)$ and
    \begin{equation*}
        \sign(a_{i,\chi(a^i)}) = \sign(b_{i,\chi(b^i)})
    \end{equation*}
    for all $i = 1, \dots, m$ and $\chi(a_j) = \chi(b_j)$ and
    \begin{equation*}
        \sign(a_{\chi(a_j), j}) = \sign(b_{\chi(b_j), j})
    \end{equation*}
    for all $j = 1, \dots, n$. It is clear that for matrices $A,B \in \mathrm{PC}_{m,n}$ that belong to the same component graphs $G_A$ and $G_B$ coincide, so the behaviour of signs during the alternating minimization method is the same for $A$ and $B$. 
    
    2. Connected components of $\mathrm{PC}_{m,n}$ that contain rank-1 matrices are especially easy to describe. Indeed, if $A \in \mathrm{PC}_{m,n}$, then the connected component of $A$ contains a rank-1 matrix if and only if $\chi(a^{i_1}) = \chi(a^{i_2})$ and $\chi(a_{j_1}) = \chi(a_{j_2})$ for all $i_1,i_2 = 1, \dots, m$ and $j_1, j_2 = 1, \dots, n$. That is, for such matrix $G^{sd}_A$ and $G_A$ have depth $2$, and $G_A$ has two loop vertices (or, equivalently, two connected components). Moreover, during the alternating minimization method after the first iteration the signs of vectors stabilize. 
\end{remark}

\section{Convergence of the alternating minimization method}
\label{sec:conv}
In this section we prove the following result.
\begin{theorem}
    \label{theorem:mainth}
    Let $A \in \mathbb R^{m \times n}$ preserve Chebyshev systems and $v,\tilde{v} \in \mathrm{Ch}_n$. Assume that $\mathcal S(v) = \mathcal S(\tilde{v})$. Then $E(A,v) = E(A,\tilde{v})$.
\end{theorem}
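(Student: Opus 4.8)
The plan is to collapse the problem onto a single ``frozen-sign'' orthant and then to show that $E(A,\cdot)$ is constant there. Write $T(v) = \psi(A,\phi(A,v))$ for one full alternating step, so that $v^{(k)} = T^k(v)$. By \lem{basic_prop_E}~\ref{basic_prop_E_ii} we have $E(A,v) = E(A,T^k(v))$ for every $k$, while \thm{SignIndependence} gives $\mathcal S(T^k(v)) = \mathcal S(T^k(\tilde v))$ for all $k$ whenever $\mathcal S(v) = \mathcal S(\tilde v)$. Taking $k = p$ equal to the depth of $G^{sd}_A$, the sign-stabilization results of \lem{G_V_basic} and \thm{sign_thm} guarantee that $t^{*} := \mathcal S(T^{p}(v)) = \mathcal S(T^{p}(\tilde v))$ is a loop vertex of $G_A$, i.e.\ $\mathcal V(t^{*}) = t^{*}$. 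Replacing $v,\tilde v$ by $T^{p}(v),T^{p}(\tilde v)$, which preserves the respective values of $E$, I may therefore assume $\mathcal S(v) = \mathcal S(\tilde v) = t^{*}$, so that the full step $T$ maps the convex (hence connected) orthant $O^{*} = \{w \in \mathrm{Ch}_n : \mathcal S(w) = t^{*}\}$ into itself.

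Next I would exploit compactness. By the amplitude bound in \lem{basic_prop}~\ref{basic_prop_ii}, every iterate with $k \ge 1$ satisfies $\mathrm{am}(v^{(k)}) \le M := 4\|A\|_C/\delta_c$, and by the scale invariance $E(A,cw) = E(A,w)$ from \lem{basic_prop_E}~\ref{basic_prop_E_i} I may normalize in $\|\cdot\|_\infty$ and work inside the compact slice $K = \{w \in O^{*} : \|w\|_\infty = 1,\ \min_i |w_i| \ge 1/M\}$, which the normalized step again carries into itself. Writing $\Phi(w) = \inf_u \|A - uw^T\|_C = \|A - \phi(A,w)w^T\|_C$, a continuous and scale-invariant function, the monotone sequence $\Phi(v^{(k)})$ decreases to $E(A,v)$. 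Extracting a convergent subsequence $v^{(k_l)} \to v^{*} \in K \subset O^{*}$ and using continuity of $\Phi$ and of $T$, I obtain $\Phi(v^{*}) = \Phi(T(v^{*})) = E(A,v)$; hence the alternating step does not decrease the error at $v^{*}$, so $v^{*}$ is a stationary (local-minimum) direction of \eq{main_problem} lying in $O^{*}$ with value $E(A,v)$. The same construction applied to $\tilde v$ produces a stationary $\tilde v^{*} \in O^{*}$ with value $E(A,\tilde v)$.

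It then remains to prove that $E(A,\cdot)$ is constant on the connected set $O^{*}$. Upper semicontinuity (\lem{basic_prop_E}~\ref{basic_prop_E_iii}) already yields $\limsup_{w \to w_0} E(A,w) \le E(A,w_0)$, so the task reduces to a matching lower bound, for which I would argue local constancy as follows. On $O^{*}$ the sign pattern is frozen, so the maps $\phi,\psi$, and hence $T$, depend on their argument in a controlled, sign-independent manner; consequently, for each fixed $k$ the iterate $T^k(w)$ stays uniformly close to $T^k(w_0)$ as $w \to w_0$. Since a neighbourhood of $w_0$ is carried after finitely many steps into a neighbourhood of the stationary direction reached from $w_0$, and the error is already pinned at $E(A,w_0)$ there, this forces $E(A,w) = E(A,w_0)$ for all $w$ near $w_0$. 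Connectedness of $O^{*}$ then gives $E(A,v) = E(A,\tilde v)$, completing the argument.

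The delicate point is exactly this last step: interchanging the perturbation limit $w \to w_0$ with the iteration limit $k \to \infty$. Upper semicontinuity is automatic from the decreasing-limit-of-continuous-functions structure of $E$, but the reverse inequality (equivalently, lower semicontinuity and local constancy of $E$ on $O^{*}$) genuinely requires that the sign pattern be frozen, so that the \emph{tail} of the alternating process is controlled uniformly over a whole neighbourhood rather than pointwise. I expect the quantitative form of ``a neighbourhood flows into the basin of a common stationary direction'' --- presumably leaning on the alternance characterization of local minima from \cite{daugavet1971uniform} to ensure the relevant stationary directions are isolated and locally attracting --- to be the technical heart of the proof.
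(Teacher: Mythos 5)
Your first two paragraphs track the paper's own strategy (sign stabilization via \thm{SignIndependence} and \lem{G_V_basic}, then normalized limit points via the amplitude bound of \lem{basic_prop}, exactly as in \lem{existence_alternance}~\ref{ex_alt_i}), but the final step --- the lower bound making $E(A,\cdot)$ constant on the orthant $O^*$ --- has a genuine gap, and you have correctly identified its location but not filled it. Your sketch rests on two claims that are neither established nor available: (a) that ``a neighbourhood of $w_0$ is carried after finitely many steps into a neighbourhood of the stationary direction reached from $w_0$,'' which presupposes that the full iteration sequence converges and that the stationary direction is locally attracting --- the paper explicitly cannot prove even boundedness of the unnormalized iterates (see Remark 4 of Section 5), and only subsequential convergence of the normalized iterates is available; and (b) that ``the error is already pinned at $E(A,w_0)$'' near that stationary direction, which is circular: asserting that $E$ equals $E(A,w_0)$ on a neighbourhood of $v^*$ is precisely the local constancy you are trying to prove, since upper semicontinuity only gives $\limsup_{w \to v^*} E(A,w) \le E(A,v^*)$, never the matching lower bound. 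Isolation of stationary directions, which you invoke, is also not needed and not known to hold.

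The missing ingredient is \thm{daugavet_main}: a two-dimensional alternance for $(A,u,v)$ forces $\|A - \tilde u \tilde v^T\|_C \ge \|A - uv^T\|_C$ for \emph{every} pair $(\tilde u,\tilde v)$ whose signs match those of $(u,v)$ on the alternance rows or columns. This is a global rigidity statement on the whole sign orthant, and it replaces all of your dynamical reasoning. Indeed, once you know (via \lem{alt_basic} and \lem{existence_alternance}~\ref{ex_alt_ii}, whose nested-inclusion argument $T(A,u^{(1)},v^{(0)}) \supset T(A,u^{(1)},v^{(1)}) \supset \dots$ you also gloss over) that your two limit points $v^*$ and $\tilde v^*$ each admit a two-dimensional alternance and have coinciding signs, Daugavet's theorem applied in both directions gives
\begin{equation*}
\|A - \phi(A,v^*)(v^*)^T\|_C \le \|A - \phi(A,\tilde v^*)(\tilde v^*)^T\|_C \le \|A - \phi(A,v^*)(v^*)^T\|_C,
\end{equation*}
hence $E(A,v) = E(A,\tilde v)$ immediately --- no connectedness argument, no basins of attraction, and no interchange of limits is required. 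This head-to-head comparison of the two limit points is exactly how the paper concludes; your approach becomes correct once you substitute this rigidity theorem for the unproven ``isolated and locally attracting'' claim.
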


Now we introduce some notation. Given $A \in \mathbb R^{m \times n}$, $u \in \mathbb R^m$, and $v \in \mathbb R^n$ let
$$ T(A,u,v) = \{(i,j): |a_{ij} - u_iv_j| = \|A - uv^T\|_C\},
$$
$$ R(A,u,v) = \{i: (i,j) \in T(A,u,v)\; \text{for some}\; j\},
$$
$$ C(A,u,v) = \{j: (i,j) \in T(A,u,v)\; \text{for some}\; i\}.
$$
$$ \tau_{ij}(A,u,v) = \sign(u_i v_j (a_{ij} - u_iv_j)),
$$


\begin{definition}
    Let $A \in \mathbb R^{m \times n}$, $u \in \mathrm{Ch}_m$, and $v \in \mathrm{Ch}_n$. We say that a sequence of $2k$ $(k \ge 2)$ distinct points
\begin{equation*}
    (i_1, j_1), (i_1, j_2), (i_2, j_2), \dots, (i_k, j_k), (i_k, j_1) \in T(A,u,v)
\end{equation*}
is a two-dimensional alternance for a triple $(A,u,v)$, if
\begin{multline*}
    \tau_{i_1 j_1}(A,u,v) = -\tau_{i_1 j_2}(A,u,v) = \tau_{i_2 j_2}(A,u,v) = \dots =\\
    \tau_{i_k j_k}(A,u,v) = -\tau_{i_k j_1}(A,u,v),
\end{multline*}
\end{definition}

Our method of proving \thm{mainth} will be based on the following fact that can be found in \cite[Section~1]{daugavet1971uniform}.

\begin{theorem}
\label{theorem:daugavet_main}
(Daugavet). Let $A \in \mathbb{R}^{m \times n}$, $v \in \mathrm{Ch}_n$, and $u \in \mathrm{Ch}_m$. Assume that the sequence $(i_1, j_1), (i_1, j_2), (i_2, j_2), \dots, (i_k, j_k), (i_k, j_1)$ is a two-dimensional alternance for a triple $(A,u,v)$.

Then if $\tilde{v} \in \mathbb R^{n}$ and $\tilde{u} \in \mathbb{R}^m$ and either
\begin{equation*}
    \sign{(u_i)} = \sign{(\tilde{u}_i)}, ~~~ i \in \{i_1, i_2, \dots, i_k\},
\end{equation*}
or \begin{equation*}
    \sign{(v_j)} = \sign{(\tilde{v}_j)}, ~~~ j \in \{j_1, j_2, \dots, j_k\},
\end{equation*}


then $\|A - \tilde{u} \tilde{v}^T\|_C \ge \|A - u v^T\|_C$.
\end{theorem}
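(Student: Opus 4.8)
The plan is to argue by contradiction: assume $\|A - \tilde u\tilde v^T\|_C < \Delta$, where I write $\Delta = \|A - uv^T\|_C$ (we may assume $\Delta > 0$, since otherwise $A = uv^T$ and the inequality is trivial). The first step is to read off, at every point of the alternance, the sign of the difference $w_{ij} := \tilde u_i\tilde v_j - u_iv_j$. Setting $r_{ij} = a_{ij} - u_iv_j$ and $\tilde r_{ij} = a_{ij} - \tilde u_i\tilde v_j$, one has $w_{ij} = r_{ij} - \tilde r_{ij}$. At an alternance point $|r_{ij}| = \Delta$ while $|\tilde r_{ij}| < \Delta$ by the contradiction hypothesis, so $w_{ij}$ has the same sign as $r_{ij}$. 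Since $\tau_{ij}(A,u,v) = \sign(u_iv_j)\,\sign(r_{ij})$ (the factors are nonzero because $u\in\mathrm{Ch}_m$, $v\in\mathrm{Ch}_n$ and $|r_{ij}| = \Delta > 0$), I obtain $\sign(w_{ij}) = \tau_{ij}(A,u,v)\,\sign(u_iv_j)$ on the whole alternance.

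The second step is a normalisation by signs that converts the alternance into a monotonicity statement. Put $s_i = \sign(u_i)$, $t_j = \sign(v_j)$ and consider $\hat w_{ij} = s_it_j\, w_{ij}$; by the previous step $\sign(\hat w_{ij}) = \tau_{ij}(A,u,v)$ on the alternance. Writing $\epsilon = \tau_{i_1 j_1}(A,u,v)$, the defining sign pattern of a two-dimensional alternance then reads $\sign(\hat w_{i_a j_a}) = \epsilon$ and $\sign(\hat w_{i_a j_{a+1}}) = -\epsilon$ for every $a$, with indices taken cyclically ($j_{k+1} = j_1$). Now introduce the positive numbers $p_a = |u_{i_a}|$, $q_b = |v_{j_b}|$, together with $\tilde p_a = s_{i_a}\tilde u_{i_a}$ and $\tilde q_b = t_{j_b}\tilde v_{j_b}$, so that $\hat w_{i_a j_b} = \tilde p_a\tilde q_b - p_a q_b$.

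The third step invokes the sign hypothesis. Consider first the case $\sign(v_j) = \sign(\tilde v_j)$ for $j\in\{j_1,\dots,j_k\}$, which gives $\tilde q_b = |\tilde v_{j_b}| > 0$. Taking $\epsilon = 1$ for definiteness, the diagonal relation $\hat w_{i_a j_a} > 0$ reads $\tilde p_a\tilde q_a > p_a q_a > 0$, which forces $\tilde p_a > 0$, while the off-diagonal relation $\hat w_{i_a j_{a+1}} < 0$ reads $\tilde p_a\tilde q_{a+1} < p_a q_{a+1}$. Dividing these by the positive quantities $p_a\tilde q_a$ and $p_a\tilde q_{a+1}$ and setting $x_a = \tilde p_a/p_a$, $y_b = q_b/\tilde q_b$, they become $y_a < x_a < y_{a+1}$ for every $a$. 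Hence $y_1 < y_2 < \dots < y_k < y_{k+1} = y_1$, which is absurd; the case $\epsilon = -1$ reverses every inequality and produces the symmetric impossible chain $y_1 > y_2 > \dots > y_1$. This settles the theorem under the hypothesis on the signs of $\tilde v$.

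Finally, the case $\sign(u_i) = \sign(\tilde u_i)$ for $i\in\{i_1,\dots,i_k\}$ is entirely symmetric: now $\tilde p_a > 0$ is given, the positivity of each $\tilde q_b$ is recovered from whichever of the two alternance inequalities has a positive right-hand side, and the identical cyclic chain yields the contradiction (equivalently, one applies the already-proved case to $A^T$, $v$, $u$, observing that transposition carries an alternance for $(A,u,v)$ to one for $(A^T,v,u)$). The step I expect to require the most care is the sign-normalisation: one must verify that reducing to all-positive factors $p_a,q_b$ — and, through a single alternance inequality, to positive $\tilde p_a$ or $\tilde q_b$ — is exactly what transforms the alternating signs of $\hat w$ into the strictly monotone cyclic chain, and that each of the two sign hypotheses supplies precisely the one extra positivity needed to launch that chain.
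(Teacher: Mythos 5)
The paper offers no proof of \thm{daugavet_main} to compare against: it is imported as a known result from Daugavet's 1971 paper (cited as \cite{daugavet1971uniform}), so your argument has to stand on its own --- and it does; it is correct and complete. The key reduction is sound: under the contradiction hypothesis, $|\tilde{r}_{ij}| \le \|A - \tilde{u}\tilde{v}^T\|_C < \Delta = |r_{ij}|$ at every alternance point, so $\sign(w_{ij}) = \sign(r_{ij})$, and after the sign normalization $\hat{w}_{i_a j_b} = \tilde{p}_a \tilde{q}_b - p_a q_b$ one indeed has $\sign(\hat{w}_{ij}) = \tau_{ij}(A,u,v)$, turning the alternance into the strict cyclic chain $y_a < x_a < y_{a+1}$, which closes to the impossible $y_1 < y_1$. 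Note this last step needs no distinctness of the $j_b$ (only the $2k$ points are assumed distinct in the paper's definition): a strict inequality around a cycle is absurd regardless, so no gap arises there. Your degenerate case $\Delta = 0$ is handled, and your transposition remark is verifiably correct: relisting the cycle as $(j_2,i_1),(j_2,i_2),(j_3,i_2),\dots,(j_1,i_k),(j_1,i_1)$ exhibits a two-dimensional alternance for $(A^T,v,u)$ with leading sign $-\epsilon$, so the row-sign case does follow formally from the column-sign case. You also correctly use only what the hypothesis gives, namely nonvanishing of $\tilde{v}$ (resp.\ $\tilde{u}$) on the $k$ alternance columns (rows); this matters since $\tilde{u},\tilde{v}$ are arbitrary vectors, not Chebyshev.

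One sentence is worth making explicit rather than implicit: in the case $\epsilon = -1$ of the column-sign hypothesis, the positivity of $\tilde{p}_a$ must be extracted from the \emph{off-diagonal} inequality $\tilde{p}_a \tilde{q}_{a+1} > p_a q_{a+1} > 0$ (the diagonal inequality $\tilde{p}_a\tilde{q}_a < p_a q_a$ gives nothing), after which the reversed chain $y_{a+1} < x_a < y_a$ runs as before; your phrase ``reverses every inequality'' covers this, but the source of the positivity should be named, exactly as you do for the symmetric row-sign case. With that clarification, your proof is a valid self-contained substitute for the external citation, resting on the same elementary mechanism --- monotonicity of the ratios $q_b/\tilde{q}_b$ around the alternance cycle --- that makes such alternance-based lower bounds work.
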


\begin{lemma}\label{lemma:alt_basic}
    Let $A \in \mathbb R^{m \times n}$ preserve Chebyshev systems and $v \in \mathrm{Ch}_n$. Let $u = \phi(A,v)$, $\tilde{v} = \psi(A,u)$, and $\tilde{u} = \phi(A,\tilde{v})$. Then the following statements hold.
    \begin{enumerate}[label=(\roman*)]
        \item\label{alt_basic_i} For all $i \in R(A,u,v)$ there exist two distinct $j_1,j_2$ such that
        \begin{equation*}
            (i,j_1), (i,j_2) \in T(A,u,v) \quad \text{and} \quad \tau_{i j_1}(A,u,v) = -\tau_{i j_2}(A,u,v).
        \end{equation*}
        \item\label{alt_basic_ii} Assume that $\|A - uv^T\|_C = \|A - u \tilde{v}^T\|_C$. Then $T(A,u,\tilde{v}) \subset T(A,u,v)$. Moreover, $j \in C(A,u,\tilde{v})$ if and only if $j \in C(A,u,v)$ and $v_j = \tilde{v}_j$ and this is the case if and only if there exist two distinct $i_1, i_2$ such that $(i_1,j), (i_2,j) \in T(A,u,v)$ and $\tau_{i_1 j}(A,u,v) = -\tau_{i_2 j}(A,u,v)$.
        \item\label{alt_basic_iii} Assume that $\|A - uv^T\|_C = \|A - u \tilde{v}^T\|_C$ and $T(A,u,\tilde{v}) = T(A,u,v)$. Then the triple $(A,u,v)$ admits a two-dimensional alternance.
        \item\label{alt_basic_iv} Assume that $\|A - \tilde{u}\tilde{v}^T\|_C = \|A - u v^T\|_C$ and that the triple $(A,\tilde{u}, \tilde{v})$ admits a two-dimensional alternance. Then the triple $(A,u,v)$ also admits a two-dimensional alternance (more precisely, the alternance for $(A,\tilde{u}, \tilde{v})$ is an alternance for $(A,u,v)$).
    \end{enumerate}
    \begin{proof}
        The statement \ref{alt_basic_i} follows from \lem{one_vector_approx} since $u_i = \mu(a^i, v)$. To prove \ref{alt_basic_ii} consider an index $j$. Note that
        \begin{equation*}
            \|a_j - v_ju\|_\infty \ge \|a_j - \tilde{v}_j u\|_\infty
        \end{equation*}
        as $\tilde{v}_j = \mu(a_j, u)$. Thus, if $j \notin C(A,u,v)$, then
        \begin{equation*}
            \|a_j - \tilde{v}_j u\|_\infty < \|A - u v^T\|_C
        \end{equation*}
        and, therefore, $j \notin C(A,u,\tilde{v})$. Also, if $\tilde{v}_j \ne v_j$, then
        \begin{equation*}
            \|a_j - \tilde{v}_j u\|_\infty < \|a_j - v_j u\|_\infty \le \|A - u v^T\|_C.
        \end{equation*}
        Thus, if $j \in C(A,u,\tilde{v})$, then $j \in C(A,u,v)$ and $\tilde{v}_j = v_j$. But if $j \in C(A,u,v)$ and $\tilde{v}_j = v_j$, then
        \begin{equation*}
            \|a_j - \tilde{v}_j u\| = \|a_j - v_j u\| = \|A - u v^T\|_C,
        \end{equation*}
        which implies that $j \in C(A,u,\tilde{v})$. \lem{one_vector_approx} implies that $v_j = \tilde{v}_j$ and ${j \in C(A,u,v)}$ if and only if there exist two distinct $i_1, i_2$ such that
        \begin{equation*}
            (i_1,j), (i_2,j) \in T(A,u,v) \quad \text{and} \quad \tau_{i_1 j}(A,u,v) = -\tau_{i_2 j}(A,u,v).
        \end{equation*}
        It remains to prove that ${T(A,u,\tilde{v}) \subset T(A,u,v)}$. Consider $(i,j) \in T(A,u,\tilde{v})$. Then $j \in C(A,u,\tilde{v})$ and, therefore, $\tilde{v}_j = v_j$. Therefore, $a_{ij} - u_i v_j = a_{ij} - u_i \tilde{v}_j$, so $|a_{ij} - u_iv_j| = \|A - u v^T\|_C$ and ${(i,j) \in T(A,u,v)}$.

        Now assume that $T(A,u,v) = T(A,u,\tilde{v})$. Let $(i,j) \in T(A,u,v)$. By statement~\ref{alt_basic_i} there exist two distinct $j_1,j_2$ such that
        \begin{equation*}
            (i,j_1), (i,j_2) \in T(A,u,v) \quad \text{and} \quad \tau_{i j_1}(A,u,v) = -\tau_{i j_2}(A,u,v).
        \end{equation*}
        It follows that for either $\tilde{j} = j_1$, or $\tilde{j} = j_2$, we have $\tau_{ij}(A,u,v) = -\tau_{i\tilde{j}}(A,u,v)$. By the same reasoning using~\ref{alt_basic_ii} for all $(i,j) \in T(A,u,v) = T(A,u,\tilde{v})$ there exists $\tilde{i}$ such that $(\tilde{i}, j) \in T(A,u,v)$ and $\tau_{ij}(A,u,v) = -\tau_{\tilde{i}j}(A,u,v)$. The existence of an alternance for $(A,u,v)$ now follows from an obvious induction argument in view of the finiteness of $T(A,u,v)$. Thus,~\ref{alt_basic_iii} is proved.

        Finally, assume that $\|A - u v^T\|_C = \|A - \tilde{u}\tilde{v}^T\|_C$. It is clear that
        \begin{equation*}
            \|A - u v^T\|_C = \|A - u\tilde{v}^T\|_C = \|A - \tilde{u}\tilde{v}^T\|_C,
        \end{equation*}
        since by definition
        \begin{equation*}
            \|A - u v^T\|_C \ge \|A - u\tilde{v}^T\|_C \ge \|A - \tilde{u}\tilde{v}^T\|_C.
        \end{equation*}
        By applying~\ref{alt_basic_ii} twice (for $A$ and $A^T$) we obtain that $T(A,\tilde{u},\tilde{v}) \subset T(A,u,v)$, $u_i = \tilde{u}_i$ for all $i \in R(A,\tilde{u}, \tilde{v})$, and $v_j = \tilde{v}_j$ for all $j \in C(A,\tilde{u}, \tilde{v})$. Thus, $\tau_{ij}(A,\tilde{u},\tilde{v}) = \tau_{ij}(A,u,v)$ for all $(i,j) \in T(A,\tilde{u}, \tilde{v})$. Now it is obvious that a two-dimensional alternance for $(A,\tilde{u}, \tilde{v})$ is an alternance for $(A,u,v)$.
    \end{proof}
\end{lemma}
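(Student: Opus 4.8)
The plan is to handle the four parts in order, reducing \ref{alt_basic_i} and \ref{alt_basic_ii} to the one-dimensional optimality criterion of \lem{one_vector_approx}, and then bootstrapping these local facts into the two-dimensional alternance required in \ref{alt_basic_iii} and \ref{alt_basic_iv}.

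For \ref{alt_basic_i} I would observe that $i \in R(A,u,v)$ means row $i$ attains the global maximum, i.e. $\|a^i - u_i v\|_\infty = \|A - uv^T\|_C$, where $u_i = \mu(a^i,v)$ is the one-dimensional optimizer. \lem{one_vector_approx} then supplies distinct $j_1,j_2$ with $|a_{ij_1} - u_i v_{j_1}| = |a_{ij_2} - u_i v_{j_2}| = \|a^i - u_i v\|_\infty$ and $\sign(v_{j_1}(a_{ij_1}-u_iv_{j_1})) = -\sign(v_{j_2}(a_{ij_2}-u_iv_{j_2}))$. Since these moduli equal the global maximum, both $(i,j_1),(i,j_2)\in T(A,u,v)$; and because $u_i\neq 0$ is a single fixed sign across the row (as $A$ preserves Chebyshev systems), multiplying each sign by $\sign(u_i)$ converts the displayed condition into $\tau_{ij_1}(A,u,v) = -\tau_{ij_2}(A,u,v)$.

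For \ref{alt_basic_ii} the engine is the estimate $\|a_j - \tilde v_j u\|_\infty \le \|a_j - v_j u\|_\infty \le \|A - uv^T\|_C$ coming from $\tilde v_j = \mu(a_j,u)$. I would argue that if $j\notin C(A,u,v)$, or if $\tilde v_j\neq v_j$ (where the first inequality becomes strict by uniqueness in \lem{one_vector_approx}), then column $j$ of $A - u\tilde v^T$ stays strictly below the common norm, so $j\notin C(A,u,\tilde v)$; conversely $j\in C(A,u,v)$ with $v_j=\tilde v_j$ forces $j\in C(A,u,\tilde v)$. The final equivalence with a column alternance is again \lem{one_vector_approx}, applied now to the column $a_j$ and the vector $u$: optimality of $v_j$ is exactly the existence of distinct $i_1,i_2$ with opposite $\tau_{ij}$. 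The inclusion $T(A,u,\tilde v)\subset T(A,u,v)$ then follows, since any $(i,j)\in T(A,u,\tilde v)$ has $\tilde v_j=v_j$, making the two residuals literally equal.

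Parts \ref{alt_basic_iii} and \ref{alt_basic_iv} assemble the alternance. For \ref{alt_basic_iii}, part \ref{alt_basic_i} gives every maximal position a \emph{row partner} (same row, opposite $\tau$), and the hypothesis $T(A,u,\tilde v)=T(A,u,v)$ (which forces $C(A,u,\tilde v)=C(A,u,v)$) fed into part \ref{alt_basic_ii} gives every maximal position a \emph{column partner} (same column, opposite $\tau$). Starting from any $(i_1,j_1)\in T(A,u,v)$ and alternately following row and column partners produces an infinite walk in $T(A,u,v)$ with alternating consecutive $\tau$ values; finiteness of $T(A,u,v)$ forces a repetition, and extracting the first repeated vertex at a corner yields the closed sequence $(i_1,j_1),(i_1,j_2),\dots,(i_k,j_1)$ of the definition. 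I expect this cycle extraction to be the most delicate step: one must verify that the repetition can be taken so that the resulting cycle consists of $2k$ distinct points and carries the exact alternating sign pattern, rather than merely a closed walk. For \ref{alt_basic_iv} I would first record the monotone chain $\|A - uv^T\|_C \ge \|A - u\tilde v^T\|_C \ge \|A - \tilde u\tilde v^T\|_C$, which under the hypothesis collapses to equality throughout. Applying part \ref{alt_basic_ii} to $A$ (comparing $v$ with $\tilde v$) and to $A^T$ (comparing $u$ with $\tilde u$, via the identity $\psi(A,\cdot)=\phi(A^T,\cdot)$) yields $T(A,\tilde u,\tilde v)\subset T(A,u,v)$ together with $u_i=\tilde u_i$ on $R(A,\tilde u,\tilde v)$ and $v_j=\tilde v_j$ on $C(A,\tilde u,\tilde v)$. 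Consequently $\tau_{ij}(A,\tilde u,\tilde v)=\tau_{ij}(A,u,v)$ at every point of $T(A,\tilde u,\tilde v)$, so the given alternance for $(A,\tilde u,\tilde v)$ is verbatim an alternance for $(A,u,v)$.
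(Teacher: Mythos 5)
Your proposal is correct and follows essentially the same route as the paper's proof: parts (i) and (ii) reduce to the one-dimensional criterion of \lem{one_vector_approx}, part (iii) chains row and column partners inside the finite set $T(A,u,v)$, and part (iv) collapses the inequality chain and applies (ii) to both $A$ and $A^T$ so that $\tau$ agrees on $T(A,\tilde u,\tilde v)$. The cycle-extraction step you flag as delicate in (iii) resolves automatically --- since $\tau$ alternates along the walk, the first repeated point must occur after an even number of steps, so the extracted closed subwalk consists of distinct points with alternating row/column moves and the required sign pattern --- which is precisely what the paper compresses into its ``obvious induction argument.''
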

    
\begin{lemma}\label{lemma:existence_alternance}
    Let $A \in \mathrm{PC}_{m,n}$ and $v \in \mathrm{Ch}_n$. Then the following statements hold.
    \begin{enumerate}[label=(\roman*)]
        \item\label{ex_alt_i} Let $\{u^{(k)}\}_{k \in \mathbb N}$ and $\{v^{(k)}\}_{k \in \mathbb N}$ be constructed by the alternating minimization method for $A$ and the initial point $v^{(0)} = v$. Then arbitrary limit point $w$ of the sequence $w_k = v^{(k)} / \|v^{(k)}\|_\infty$ is a Chebyshev vector satisfying
        \begin{equation*}
            E(A,w) = \|A - \phi(A,w)w^T\|_C = E(A,v).
        \end{equation*}
        \item\label{ex_alt_ii} If $\|A - \phi(A,v)v^T\|_C = E(A,v)$, then the triple $(A,\phi(A,v), v)$ admits a two-dimensional alternance.
    \end{enumerate}
    \begin{proof}
        Let $w$ be the limit of a subsequence $w_{l_k}$ of $w_k$. Since the amplitude of a Chebyshev vector does not change under the multiplication by a non-zero constant, \lem{basic_prop}~\ref{basic_prop_ii} implies that $\mathrm{am}(w_k) \le C$, where $C > 0$ is some constant that depends only on the matrix $A$. It is obvious that a convergent sequence of Chebyshev vectors with bounded amplitude converges either to a zero vector, or a Chebyshev vector. Since, $\|w_k\|_\infty = 1$, it follows that $w$ is a Chebyshev vector. Finally, $E(A, w_k) = E(A,v)$ for all $k \in \mathbb N$ and the upper semi-continuity of $E$ (see \lem{basic_prop_E}~\ref{basic_prop_E_iii}) implies ${E(A,w) \ge E(A,v)}$. Moreover, $\|A - \phi(A,w)w^T\|_C \ge E(A,w)$
        and
        \begin{multline*}
            \|A - \phi(A,w)w^T\|_C = \lim \|A - \phi(A,w_{l_k}) w_{l_k}^T\|_C =\\
            \lim \|A - \phi(A,v^{(l_k)}) (v^{(l_k)})^T\|_C = E(A,v).
        \end{multline*}
        Thus,
        \begin{equation*}
            E(A,v) = \|A - \phi(A,w)w^T\|_C \ge E(A,w) \ge E(A,v)
        \end{equation*}
        and~\ref{ex_alt_i} is proved.

        Now assume that $\|A - \phi(A,v)v^T\|_C = E(A,v)$ and let $\{u^{(k)}\}_{k \in \mathbb N}$ and $\{v^{(k)}\}_{k \in \mathbb N}$ be constructed by the alternating minimization method for $A$ and the initial point $v^{(0)} = v$. Obviously,
        \begin{equation*}
            \|A - u^{(k)}(v^{(k-1)})^T\|_C = \|A - u^{(k)}(v^{(k)})^T\|_C = \|A - u^{(k+1)}(v^{(k)})^T\|_C
        \end{equation*}
        for all $k \in \mathbb N$. Thus, \lem{alt_basic}~\ref{alt_basic_ii} implies that $$T(A,u^{(1)}, v^{(0)}) \supset T(A,u^{(1)}, v^{(1)}) \supset T(A,u^{(2)}, v^{(1)}) \supset T(A,u^{(2)}, v^{(2)}) \supset \dots$$
        Since all sets in this sequence are finite and non-empty, there is $k \in \mathbb N$ such that $T(A, u^{(k+1)}, v^{(k)}) = T(A, u^{(k+1)}, v^{(k+1)})$. \lem{alt_basic}~\ref{alt_basic_iii} implies that the triple $(A, u^{(k+1)}, v^{(k)})$ admits a two-dimensional alternance. Applying \lem{alt_basic}~\ref{alt_basic_iv} $k$ times we obtain that $(A, u^{(1)}, v^{(0)}) = (A,\phi(A,v), v)$ also admits a two-dimensional alternance. 
    \end{proof}
\end{lemma}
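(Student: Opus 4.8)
The plan is to treat the two parts separately, with the unifying theme that the monotone error sequence of \lem{basic_prop}~\ref{basic_prop_i}, together with the scale invariance of the relevant quantities, controls everything.

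For \ref{ex_alt_i}, I would first observe that $\mathrm{am}(w_k) = \mathrm{am}(v^{(k)})$ since the amplitude is invariant under nonzero scaling, so $\mathrm{am}(w_k) \le C$ for a constant $C = 4\|A\|_C/\delta_c$ depending only on $A$ by \lem{basic_prop}~\ref{basic_prop_ii}. Because $\|w_k\|_\infty = 1$, the bound $\min_i |w_{k,i}| \ge 1/C$ passes to a limit point $w$, giving $\|w\|_\infty = 1$ and $\min_i |w_i| \ge 1/C > 0$; hence $w$ is Chebyshev. For the equality chain I would assemble three facts: (a) $E(A,w_k) = E(A,v^{(k)}) = E(A,v)$ for all $k$, the first equality by \lem{basic_prop_E}~\ref{basic_prop_E_i} and the second by iterating \lem{basic_prop_E}~\ref{basic_prop_E_ii}; (b) upper semi-continuity of $E$ (\lem{basic_prop_E}~\ref{basic_prop_E_iii}) along the convergent subsequence $w_{l_k} \to w$, which forces $E(A,w) \ge E(A,v)$; and (c) the definition of $E$, which gives $\|A - \phi(A,w)w^T\|_C \ge E(A,w)$. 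The key computation is that, since $\phi(A,cv) = c^{-1}\phi(A,v)$, the scalings cancel in $\phi(A,w_{l_k})w_{l_k}^T = \phi(A,v^{(l_k)})(v^{(l_k)})^T = u^{(l_k+1)}(v^{(l_k)})^T$, so by continuity of $\phi$ and of the norm $\|A - \phi(A,w)w^T\|_C = \lim_k \|A - u^{(l_k+1)}(v^{(l_k)})^T\|_C = E(A,v)$. Chaining $E(A,v) = \|A - \phi(A,w)w^T\|_C \ge E(A,w) \ge E(A,v)$ forces equality throughout.

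For \ref{ex_alt_ii}, the crucial opening step is to note that the hypothesis $\|A - \phi(A,v)v^T\|_C = E(A,v)$ together with the monotonicity of \lem{basic_prop}~\ref{basic_prop_i} pins every term of the decreasing error sequence to the common value $E(A,v)$; that is, $\|A - u^{(k)}(v^{(k-1)})^T\|_C = \|A - u^{(k)}(v^{(k)})^T\|_C = \|A - u^{(k+1)}(v^{(k)})^T\|_C = E(A,v)$ for all $k$. This equality of errors is exactly the hypothesis required to invoke \lem{alt_basic}~\ref{alt_basic_ii} repeatedly, applied alternately to $A$ and to $A^T$, which produces the nested chain $T(A,u^{(1)},v^{(0)}) \supset T(A,u^{(1)},v^{(1)}) \supset T(A,u^{(2)},v^{(1)}) \supset \cdots$. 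Since each $T$-set is a non-empty finite subset of the index grid, the chain cannot strictly decrease indefinitely, so some consecutive pair coincides, say $T(A,u^{(k+1)},v^{(k)}) = T(A,u^{(k+1)},v^{(k+1)})$. Then \lem{alt_basic}~\ref{alt_basic_iii} yields a two-dimensional alternance for $(A,u^{(k+1)},v^{(k)})$, and finally $k$ applications of \lem{alt_basic}~\ref{alt_basic_iv}, each legitimate because all errors equal $E(A,v)$, transport the alternance backward through $(A,u^{(k)},v^{(k-1)}),\dots$ down to $(A,u^{(1)},v^{(0)}) = (A,\phi(A,v),v)$.

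The main obstacle I anticipate is the bookkeeping in part \ref{ex_alt_ii}: obtaining the nested inclusion chain requires applying \lem{alt_basic}~\ref{alt_basic_ii} in both the $A$ and $A^T$ directions, alternating between columns and rows, and one must carefully verify that each invocation of \ref{alt_basic_ii}--\ref{alt_basic_iv} is licensed by the equality of the errors. The conceptual payoff is that a single scalar hypothesis forces the entire error sequence to be constant, which is what unlocks all four parts of \lem{alt_basic}, after which the existence of the alternance follows from finiteness and a short backward induction. In part \ref{ex_alt_i} the only genuinely subtle point is the cancellation of scalings that identifies $\|A - \phi(A,w_{l_k})w_{l_k}^T\|_C$ with an honest term of the error sequence, which is precisely what allows upper semi-continuity to squeeze $E(A,w)$ down to $E(A,v)$.
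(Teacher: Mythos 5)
Your proposal is correct and follows essentially the same route as the paper's own proof: in \ref{ex_alt_i} the amplitude bound from \lem{basic_prop}~\ref{basic_prop_ii} plus upper semi-continuity of $E$ and the scale cancellation $\phi(A,w_{l_k})w_{l_k}^T = \phi(A,v^{(l_k)})(v^{(l_k)})^T$ to squeeze $E(A,v) = \|A-\phi(A,w)w^T\|_C \ge E(A,w) \ge E(A,v)$, and in \ref{ex_alt_ii} the constancy of the error sequence licensing the nested chain of finite sets $T(A,u^{(1)},v^{(0)}) \supset T(A,u^{(1)},v^{(1)}) \supset \dots$ via \lem{alt_basic}~\ref{alt_basic_ii}, a stabilized pair yielding an alternance by \ref{alt_basic_iii}, and $k$ applications of \ref{alt_basic_iv} transporting it back to $(A,\phi(A,v),v)$. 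The only differences are that you spell out details the paper leaves implicit (the explicit constant $4\|A\|_C/\delta_c$, the lower bound $\min_i|w_i| \ge 1/C$, and the alternating $A$/$A^T$ bookkeeping), which is harmless.
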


\begin{proof}[Proof of Theorem~\ref{theorem:mainth}]
    Consider $v$ and $\tilde{v}$ from $\mathrm{Ch}_n$ such that $\mathcal S(v) = \mathcal S(\tilde{v})$. Let $\{u^{(k)}\}_{k \in \mathbb N}$ and $\{v^{(k)}\}_{k \in \mathbb N}$ (resp. $\{\tilde{u}^{(k)}\}_{k \in \mathbb N}$ and $\{\tilde{v}^{(k)}\}_{k \in \mathbb N}$) be constructed by the alternating minimization method for $A$ with the initial point ${v^{(0)} = v}$ (resp. $\tilde{v}^{(0)} = \tilde{v}$). Let $w$ and $\tilde{w}$ be some limit points of the sequences ${w_k = v^{(k)}/\|v^{(k)}\|_\infty}$ and $\tilde{w}_k = \tilde{v}^{(k)}/\|\tilde{v}^{(k)}\|_\infty$ respectively. By \lem{existence_alternance} we have that $w,\tilde{w} \in \mathrm{Ch}_n$ and
    \begin{equation*}
        E(A,v) = \|A - \phi(A,w) w^T\|_C, \quad E(A,\tilde{v}) = \|A - \phi(A,\tilde{w}) \tilde{w}^T\|_C,
    \end{equation*}
    and that triples $(A,\phi(A,w), w)$ and $(A, \phi(A, \tilde{w}), \tilde{w})$ admit two-dimensional alternances. Also note that $\mathcal S(w) = \mathcal S(\tilde{w})$. Indeed, by \thm{SignIndependence} we have $\mathcal S(w_k) = \mathcal S(\tilde{w}_k)$ for all $k \in \mathbb N$. Also, \lem{G_V_basic}~\ref{basicII} implies that $\mathcal S(w_k) = \mathcal S(w_{k+1})$ and $\mathcal S(\tilde{w}_k) = \mathcal S(\tilde{w}_{k+1})$ for large $k$. Thus, $\mathcal S(w)$ coincides with $\mathcal S(\tilde{w})$ and also coincides with $\mathcal S(w_k)$ for large $k$. Finally, \thm{daugavet_main} implies that
    \begin{equation*}
        \|A - \phi(A,w) w^T\|_C \le \|A - \phi(A,\tilde{w}) \tilde{w}^T\|_C
    \end{equation*}
    and
    \begin{equation*}
        \|A - \phi(A,w) w^T\|_C \ge \|A - \phi(A,\tilde{w}) \tilde{w}^T\|_C,
    \end{equation*}
    since the triples $(A,\phi(A,w), w)$ and $(A, \phi(A, \tilde{w}), \tilde{w})$ admit two-dimensional alternances and the signs of $w$ and $\tilde{w}$ coincide. Then ${E(A,v) = E(A, \tilde{v})}$.
\end{proof}

Using \thm{mainth} we can prove that it is possible to compute the distance from a matrix $A$ to the set of all rank-1 matrices in Chebyshev norm by a finite number of runs of the alternating minimization method. Namely, we prove the following

\begin{theorem}\label{theorem:global_min}
    Let $A \in \mathbb R^{m \times n}$ preserve Chebyshev systems and let $G_A$ be the sign transition graph for $A$. Let $L \subset \{-1, 1\}^n$ be the set of all loop vertices of $G_A$. Then 
    $$\inf\{ \|A - u v^T\|_C :u \in \mathbb R^m,v \in \mathbb R^n\} = \min_{t \in L} E(A,t).$$
    \begin{proof}
        Let 
        \begin{equation*}
            d = \inf\{\|A - u v^T\|_C:u \in \mathbb R^m,v \in \mathbb R^n\}.
        \end{equation*}
        From the definition of $E(A,t)$ it is clear that $E(A,t) \ge d$ for all $t \in \mathrm{Ch}_n$. Thus, $\min_{t \in L} E(A,t) \ge d$. To prove the converse inequality at first note that
        \begin{equation*}
            d = \inf\{\|A - u v^T\|_C:u \in \mathbb R^m,v \in \mathrm{Ch}_n\},
        \end{equation*}
        because Chebyshev vectors are dense in $\mathbb R^n$. Moreover, for $v \in \mathrm{Ch}_n$ we have
        \begin{equation*}
            E(A,v) \le \|A - \phi(A, v) v^T\|_C = \inf \{\|A - u v^T\|_C: u \in \mathbb R^m\}.
        \end{equation*}
        Therefore,
        \begin{equation*}
            d \ge \inf\{E(A,v):v \in \mathrm{Ch}_n\}.
        \end{equation*}
        \thm{mainth} implies that
        \begin{equation*}
        \inf\{E(A,v):v \in \mathrm{Ch}_n\} = \min\{E(A,t): t \in \{-1,1\}^n\}.    
        \end{equation*}
        Finally, since
        \begin{equation*}
            E(A,v) = E(A,\psi(A,\phi(A,v))) = E(A,\mathcal S(\psi(A,\phi(A,v))))
        \end{equation*}
        for all $v \in \mathrm{Ch}_n$ and the iterations of the map $S(\psi(A,\phi(A,v)))$ eventually map all $t \in \{-1, 1\}^n$ into $L$ by \lem{G_V_basic}~\ref{basicII}, we get that
        \begin{equation*}
            \inf\{E(A,v):v \in \mathrm{Ch}_n\} = \min_{t \in L} E(A,t).
        \end{equation*}
    \end{proof}
\end{theorem}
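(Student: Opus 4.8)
The plan is to prove the two inequalities separately, writing $d$ for the left-hand side $\inf\{\|A - uv^T\|_C : u \in \mathbb R^m, v \in \mathbb R^n\}$. The inequality $\min_{t\in L}E(A,t)\ge d$ is immediate: for any $t \in \mathrm{Ch}_n$ the quantity $E(A,t)$ is by construction the limit of Chebyshev norms of genuine rank-$1$ approximations $u^{(k)}(v^{(k)})^T$, each of which is at least $d$; hence $E(A,t)\ge d$ for every $t$, and in particular for every loop vertex, so the minimum over $L$ is bounded below by $d$.

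For the reverse inequality I would first reduce the ambient infimum to Chebyshev starting vectors. Since $\mathrm{Ch}_n$ is dense in $\mathbb R^n$ and $(u,v)\mapsto\|A-uv^T\|_C$ is continuous, one has $d = \inf\{\|A-uv^T\|_C : u\in\mathbb R^m,\, v\in\mathrm{Ch}_n\}$. For a fixed $v\in\mathrm{Ch}_n$ the inner minimization over $u$ is solved by $\phi(A,v)$, so $\inf_u\|A-uv^T\|_C = \|A-\phi(A,v)v^T\|_C$; and because the sequence defining $E(A,v)$ is non-increasing and does not exceed this value (by \lem{basic_prop}~\ref{basic_prop_i}), we get $E(A,v)\le\|A-\phi(A,v)v^T\|_C$. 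Taking the infimum over $v\in\mathrm{Ch}_n$ on both sides yields $d \ge \inf\{E(A,v): v\in\mathrm{Ch}_n\}$.

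The next step is to turn this infimum into a finite minimum and then restrict it to loop vertices, which is where the theory built above does the work. By \thm{mainth} the value $E(A,v)$ depends only on $\mathcal S(v)$, so $E$ descends to a well-defined function on $\{-1,1\}^n$ and $\inf\{E(A,v):v\in\mathrm{Ch}_n\}=\min\{E(A,t):t\in\{-1,1\}^n\}$, the minimum now being attained because the index set is finite. To collapse this minimum onto $L$, I would use that $E(A,v)=E(A,\psi(A,\phi(A,v)))$ by \lem{basic_prop_E}~\ref{basic_prop_E_ii}, which combined with \thm{mainth} reads $E(A,t)=E(A,\mathcal V(t))$ at the level of sign vectors. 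Iterating this identity and invoking the stabilization of $\mathcal V$-orbits from \lem{G_V_basic}~\ref{basicII}, every $t$ satisfies $E(A,t)=E(A,f(t))$ with the terminal loop vertex $f(t)\in L$; hence the minimum over all of $\{-1,1\}^n$ equals the minimum over $L$. Chaining the inequalities gives $d\ge\min_{t\in L}E(A,t)$, which together with the first part completes the argument.

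I expect that essentially all of the real difficulty has already been absorbed into \thm{mainth}: the assertion that $E(A,v)$ is a function of $\mathcal S(v)$ alone is exactly what legitimizes the passage from an infimum over the continuum $\mathrm{Ch}_n$ to a minimum over the finite set $L$, and it is this invariance (proved via the two-dimensional alternance and Daugavet's theorem) that is the crux. Once it is in hand, the remaining ingredients are the elementary density reduction, the optimality characterization of $\phi$, and the purely combinatorial stabilization of sign orbits, none of which should present an obstacle.
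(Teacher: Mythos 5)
Your proposal is correct and follows essentially the same route as the paper's own proof: the easy bound $\min_{t\in L}E(A,t)\ge d$ from the definition of $E$, the density reduction to $\mathrm{Ch}_n$, the bound $E(A,v)\le\|A-\phi(A,v)v^T\|_C$, the passage to a finite minimum via Theorem~\ref{theorem:mainth}, and the collapse onto loop vertices via $E(A,v)=E(A,\psi(A,\phi(A,v)))$ together with the orbit stabilization of Lemma~\ref{lemma:G_V_basic}. Your explicit citation of Lemma~\ref{lemma:basic_prop_E}~\ref{basic_prop_E_ii} for the invariance step makes a reference that the paper leaves implicit, but the argument is the same.
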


\begin{remark}
1. The formula for the distance
\begin{equation*}
    d = \inf\{ \|A - u v^T\|_C :u \in \mathbb R^m,v \in \mathbb R^n\}
\end{equation*}
given in \thm{global_min} can be done slightly more effective in view of the fact that $E(A,v) = E(A,-v)$ (see \lem{basic_prop_E}~\ref{basic_prop_E_i}). So, in the notation of \thm{global_min}, the number $d$ can be found by taking $\min_{t \in \tilde{L}} E(A,t)$, where ${\tilde{L} \subset L}$ is a subset such that for all $t \in L$ either $t \in \tilde{L}$, or $-t \in \tilde{L}$. Thus, the number of runs of alternating minimization method can be halved in comparison to the number of elements in $L$, since clearly $L = -L$ by \lem{basic_prop}~\ref{basic_prop_iii}.

2. For matrices $A \in \mathrm{PC}_{m,n}$ that belong to the same component of $\mathrm{PC}_{m,n}$ with some rank-1 matrix the results are even more satisfactory. Namely,
\begin{equation*}
    \inf\{ \|A - u v^T\|_C :u \in \mathbb R^m,v \in \mathbb R^n\} = E(A,v)
\end{equation*}
for arbitrary Chebyshev vector $v$. Indeed, this equality follows from \thm{global_min} and previous remark, since there are only two loop vertices $t_1$ and $t_2$ in the Graph $G_A$ and, obviously, $t_1 = - t_2$.

3. The basic corollary of \lem{existence_alternance}, that states an existence of an alternance for a triple $(A,\phi(A,v), v)$, where $v$ is a limit point of the alternating minimization method is contained in \cite{daugavet1971uniform}. We included the proofs here, since our proof is significantly shorter (and, hopefully, easier to understand) in comparison with the one presented there.

4. Our convergence results are concerned only with the convergence of the sequence $\|A - u^{(k)} (v^{(k)})^T\|_C$, where vectors $u^{(k)}, v^{(k)}$ are constructed by the alternating minimization method. As for the convergence of the sequences $u^{(k)}$ and $v^{(k)}$, we are not able to present any satisfactory results (even the boundedness). However, in our numerical experiments these sequences always converge. Similar properties are shared by the well-known ALS method (see, e.g. \cite{mohlenkamp2013musings}).

5. The properties of the sequences $u^{(k)}$ and $v^{(k)}$ can be analyzed further in some simple cases. For example, it is possible to prove that these sequences converge for $2 \times 2$ matrices that preserve Chebyshev systems. Also we are able to prove that these sequences converge for the identity matrix. However, in the general case we are not able to prove the convergence, even if the starting point $v^{(0)} = v \in \mathrm{Ch}_n$ is chosen to give an alternance for the triple $(A, \phi(A,v), v)$. However, in the foregoing case it is clear that the sequences constructed by the alternating minimization method are bounded.
 
\end{remark}

\section{Numerical results}
\label{sec:numres}
The constructed theory makes it possible to propose an algorithm for finding optimal rank-1 approximations in the Chebyshev norm. For this section we fix a matrix $A \in \mathrm{PC}_{m,n}$. In order to compute the optimal approximation, due to \thm{global_min}, it suffices to run the method of alternating minimization from all loop vertices of the graph $G_A$. However, actually it is redundant. Indeed, consider a limit point $w$ of the sequence constructed by the method of alternating minimization started from a vector $v^{(0)}$, and let $s_1, s_2, \dots, s_k$ denote the columns on which a two-dimensional alternance of $(A,\phi(A,w), w)$ is formed. Then it is not necessary to run the method from vectors $v$ such that the stabilized signs of the vectors obtained by the alternating minimization method from $v$ coincide with signs of $w$ at positions $s_1, s_2, \dots, s_k$, since $E(A,v) \ge E(A,v^{(0)})$ for such vectors (see \thm{daugavet_main}).

We recall that $F$ denotes the set of all loop vertices of the graph $G^{sd}_A$ (see \thm{sign_thm}). For $j \in \{1, \dots, n\}$ we define $L_A(j) \in F$ as the (unique) loop vertex from which the vertex $j$ can be reached in the sense of \lem{G_sd_lemma}~\ref{G_sd_iv}. From the definition of $G^{sd}_A$ it is easy to propose an algorithm to find $L_A(j)$ in terms of the mappings $\mathfrak i$ and $\mathfrak j$, defined in \sect{mainres}. Indeed, to find $L_A(j)$ calculate the sequence $j_0, j_1, j_2, \dots$ by the rule $j_k = \mathfrak j(\mathfrak i(j_{k-1}))$ with $j_0 = j$ until $j_{k+1} = j_k$ for some $k$. In this case $L_A(j) = j_k$. It is clear that the stabilization occurs for $k < n$.

Finally, we introduce a way to store the information about previous runs of the alternating minimization algorithm in order to minimize further calculations. Let $\mathscr B$ denote the set of boolean functions in the variables $x_j$, where $j \in F$, so an element $B \in \mathscr B$ is a function $B:\prod_{j \in F} \{0, 1\} \to \{0, 1\}$. With such a function we associate a subset $P(B)$ of $\prod_{j \in F} \{-1, 1\}$ by the rule 
\begin{equation*}
    P(B) = \{x \in \prod_{j \in F} \{-1, 1\}: B(b(x)) = 1\},
\end{equation*}
where $b(x)_j = 1$, if $x_j = 1$ and $b(x)_j = 0$, if $x_j = -1$. We shall store the information about previous runs in the form of a disjunctive normal form (DNF for short) $B$ defined in a way, such that starting vectors $v$ satisfying $\mathcal S(v)|_F \in P(B)$ are less optimal than one of the previous runs.

Now we describe the idea of an algorithm that computes the distance from $A$ to the set of all rank-1 matrices in Chebyshev norm. From \thm{global_min} we deduce that to compute the distance it suffices to run the alternating minimization method from a set of vertices $V \subset \{-1,1\}^n$ that intersects each component of $G_A$. That is, \thm{sign_thm}~\ref{thm_i} suggests that it is not necessary to compute the components of loop vertices outside $F$, so we can set the components $v_j$ of the initial points arbitrarily for $j \notin F$. Moreover, after we compute a limit point $w$ and the columns $s_1, s_2, \dots, s_k$ on which the alternance is formed, we store the information about the alternance in order to minimize further calculations. In order to do so we introduce a DNF $B$ which is initialized by $0$ at the beginning. After the computation of $w$ and $s_1, \dots, s_k$ we update $B$ in the following way:
\begin{equation}\label{eq:DNF_update}
B \to B \lor x_{L_A(s_1)}^{\sign(w_{L_A(s_1)})}x_{L_A(s_2)}^{\sign(w_{L_A(s_2)})}\dots x_{L_A(s_k)}^{\sign(w_{L_A(s_k)})}.\end{equation}
Here, as usual, we use the notation $x^\nu$ in the sense
\begin{equation*}
        x^\nu = \begin{cases}
            x, & \nu = 1 \\
            1-x, & \nu=-1
        \end{cases}
\end{equation*}
Let $v$ be a Chebyshev vector such that $\mathcal S(v)|_F$ belongs to $P(B')$, where $B'$ denotes the conjunction from \eq{DNF_update}. From \lem{G_V_components}~\ref{comp_iii} it is clear that the loop vertex of $G_A$ that lies in the same component with $\mathcal S(v)$ coincides with $\mathcal S(w)$ on $\{s_1, \dots, s_k\}$ and, as was noted in the beginning of this section, $E(A,v) \ge E(A,w)$. Thus, it is not necessary to perform any computations with $v$. Therefore, the next starting point $v$ should be taken in a way such that $\mathcal S(v)|_F$ does not belong to $P(B)$. Thus, repeating the foregoing procedure while $P(B) \ne \prod_{j \in F}\{-1, 1\}$ suffices to find the optimal rank-1 approximation. Below we outline the main steps of the obtained algorithm.
\begin{enumerate}
    \item Find the set of positions in the matrix $A$ such that the element in this position is the maximum absolute value in its row and its column. Denote the set of corresponding indices of columns by $F$.
    \item Initialize a DNF $B$ with variables $x_j$, $j \in F$, as zero.
    \item Find a vector $v \in \{-1, 1\}^n$ such that $v|_F$ and $(-v)|_F$ do not belong to $P(B)$ (in particular, values $v_j$ for $j \notin F$ can be defined arbitrarily). If such a vector is not possible to find, then terminate.
    \item Perform the alternating minimization method with the starting point $v$ and find the limit point $w$ of the sequence $v^{(k)}/\|v^{(k)}\|_\infty$. Also find the columns $s_1, \dots, s_k$, where a two-dimensional alternance of the triple $(A,\phi(A,w), w)$ is positioned.
    \item Update the DNF $B$ by the formula \eq{DNF_update} and return to the step 3.
\end{enumerate}
The reasoning above ensures that $\inf \{\|A - u v^T\|_C :u \in \mathbb R^m,v \in \mathbb R^n\}$ coincides with the $\min E(A,v)$, where the minimum is taken over all $v$ that were considered in the step 3 of the algorithm.
\begin{figure}[ht!]
\centering
\begin{subfigure}{.5\textwidth}
  \centering
  \includegraphics[width=.98\linewidth]{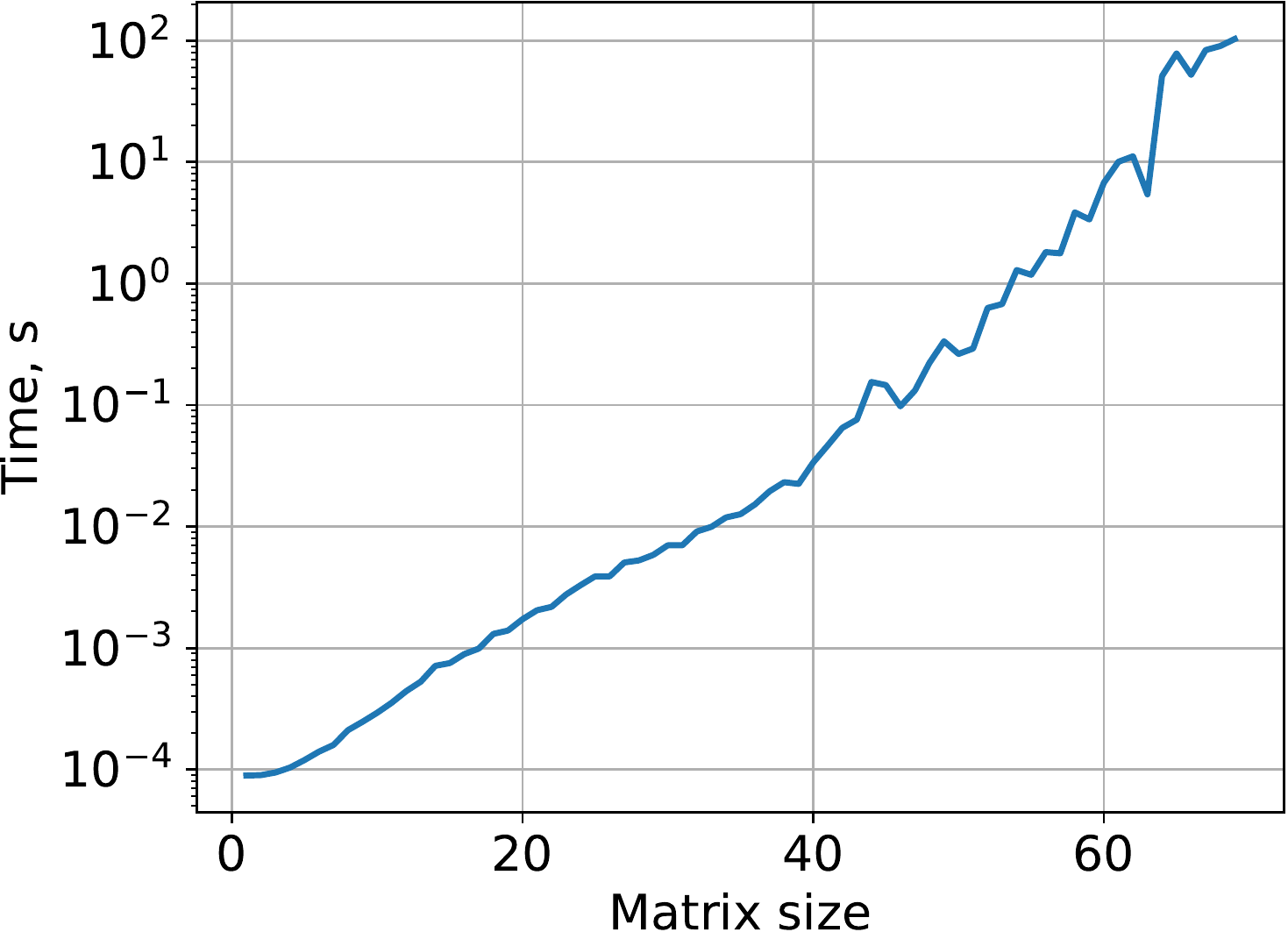}
\end{subfigure}
\caption{The average running time of the algorithm depending on the size of the matrix. The matrices are random from a normal distribution, all results are averaged over 200 runs.}
\label{fig:optimal_time}
\end{figure}

\begin{figure}[ht!]
\centering
\begin{subfigure}{.5\textwidth}
  \centering
  \includegraphics[width=.98\linewidth]{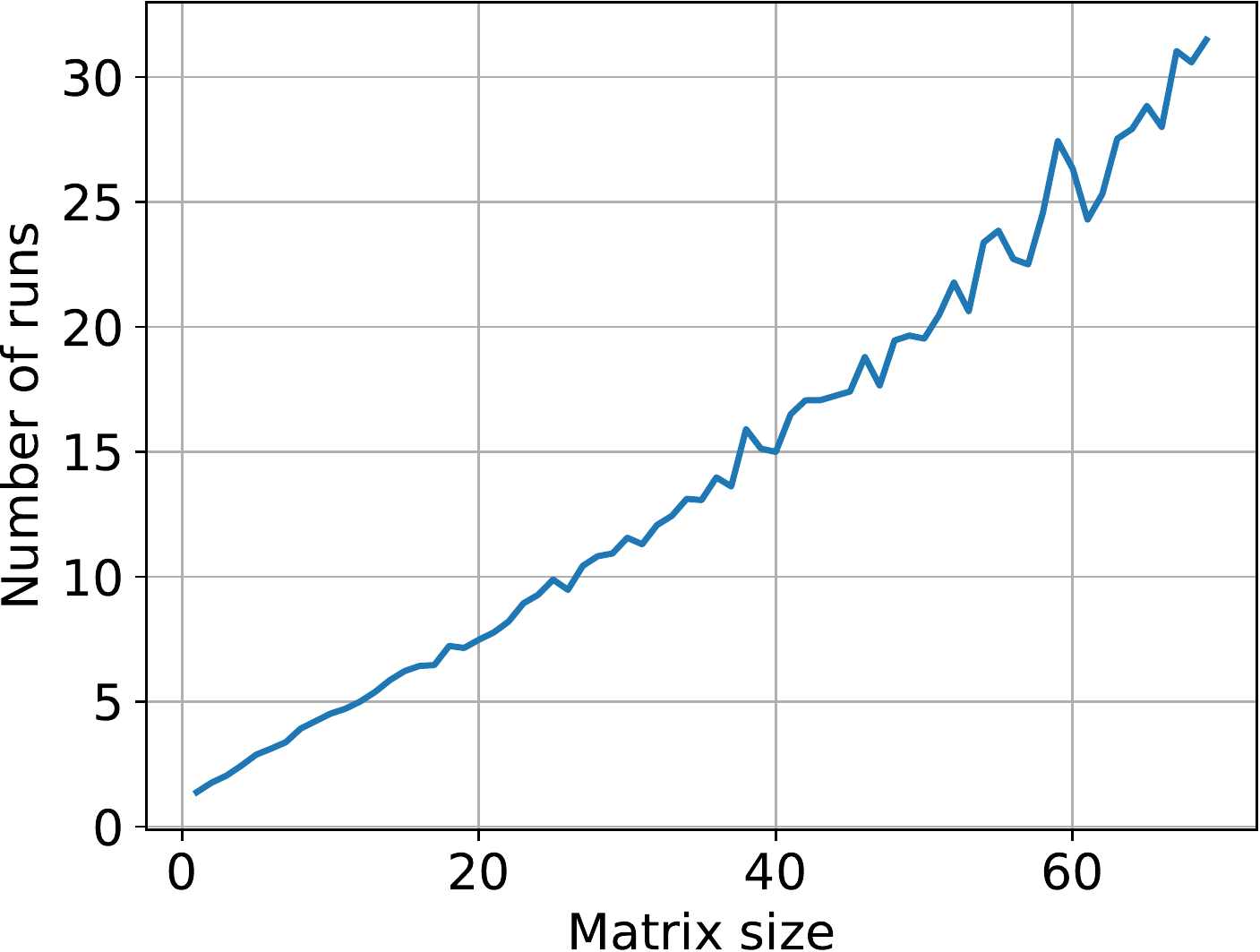}
\end{subfigure}
\caption{The average number of runs of alternating minimization depending on the size of the matrix. The matrices are random from a normal distribution, all results are averaged over 200 runs.}
\label{fig:optimal_num_runs}
\end{figure}

It is noteworthy that the problem of checking a DNF for being equal to 1 is an NP-complete problem (see, e.g. \cite[Section~4]{Karp2010}), thus, we are not always able to understand that the DNF $B$ identically equals 1, even if it is. In practice, we recursively generate components of the new vector and substitute already generated components in the current DNF. If after the substitution one of the conjunctions is identically equal to 1 with respect to the remaining variables, then we stop to generate this branch of the recursion. Otherwise, we continue to generate components until all the variables have assigned value.

The described algorithm was implemented in C++. We emphasize that the sequences generated by the alternating minimization method in numerical experiments always converge and, therefore, in the step 4 of our implementation of the algorithm there is no need in normalization and extraction of a convergent subsequence. Entries of matrices in all experiments are chosen randomly and independently from the standard normal distribution. Fig.~2 shows the average running time for square matrices of various sizes. Fig.~3 shows the number of runs of the alternating minimization method in a single performance of the foregoing algorithm (i.e. how many times this algorithm will enter the fourth step). It is noteworthy that the number of runs grows linearly with the size of the problem, and the running time of the alternating minimization method is negligible compared to the total running time. Thus, a more efficient organization of the combinatorial optimization process could give significantly better results, but we leave this for further investigation.

\section{Conclusion}
\label{sec:conclusion}
In the paper we proved that the result of the alternating minimization method depends only on the signs of the components of the starting point, and studied how the signs of the approximation vectors change when the alternating minimization method applied. As a result, a method is constructed that is capable of constructing optimal Chebyshev approximations of rank 1 for moderate matrices.

We note, that in numerical experiments the sequences, generated by the alternating minimization algorithm, are always convergent. However, we are not able to prove (or disprove) this fact.
Also it is noteworthy that similar structures can be observed in behaviour of the same algorithm for rank-r approximations. For now we are not able to provide a satisfactory analysis in the general case ($r > 1$).




 \bibliographystyle{elsarticle-num} 
 \bibliography{cas-refs}





\end{document}